\def \catC {\mathbb{C}}
\newcommand{\tr}[1]{\xrightarrow{#1}}    
\newcommand{\tl}[1]{\xleftarrow{#1}}    
\newcommand{\Cospan}[1]{\mathsf{Cospan}(#1)}
\def \PROP {\mathbf{PROP}} 
\def \PRO {\mathbf{PRO}} 
\def \F {\mf{F}}
\newcommand{\mf}{\mathbf}
\def \frob {\mf{Frob}}
\newcommand{\rewiring}[1]{\ulcorner #1 \urcorner}
\newcommand{\PERM}{\mathbf{P}}
\newcommand{\DPOstep}[1]{\rightsquigarrow_{#1}}
\newcommand\symNet{\lower3pt\hbox{$\includegraphics[width=20pt]{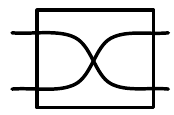}$}}
\newcommand\Idnet{\lower3pt\hbox{$\includegraphics[width=20pt]{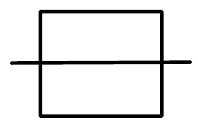}$}}
\newcommand \coc[1] {{#1}^{\star}}
\newcommand\lccB{\lower5pt\hbox{$\includegraphics[width=25pt]{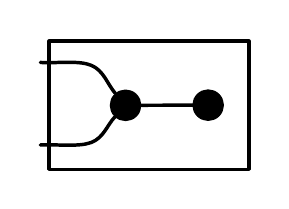}$}}
\newcommand\rccB{\lower5pt\hbox{$\includegraphics[width=25pt]{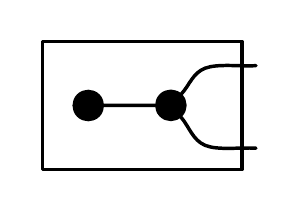}$}}
\newcommand\lccn{\lower5pt\hbox{$\includegraphics[width=20pt]{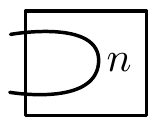}$}}
\newcommand\rccn{\lower5pt\hbox{$\includegraphics[width=20pt]{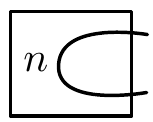}$}}
\def \df {\ \ensuremath{:\!\!=}\ }
\DeclareMathOperator{\id}{id}
\newcommand{\Defeq}
 {\stackrel{\mathrm{def}}{=}}
\newcommand{\stran}{\raise1pt\hbox{$\centerdot$}}
\newcommand{\rring}[1]{\ensuremath{\mathbb{#1}}}
\newcommand{\N}{\rring{N}}
\newcommand{\Ra}{\Rightarrow}
\newcommand{\Set}{\cat{Set}\xspace}
\newcommand{\ladj}[2]{\ar@/^/[#1]^-{#2} \ar@{}[#1]|-%

{\ifthenelse{\equal{#1}{r}}{\bot}{%

{\ifthenelse{\equal{#1}{rr}}{\bot}{%

{\ifthenelse{\equal{#1}{l}}{\top}{%

{\ifthenelse{\equal{#1}{u}}{\dashv}{%

{\vdash}}}}}}}}}}
\newcommand{\radj}[2]{\ar@/^/[#1]^-{#2}}
\newcommand{\radjff}[2]{\ar@{_{(}->}[#1]^{#2}}
\newcommand{\pullbacktop}[4]{%

{#1} \ar@/_/[ddr]_{#4} \ar@/^/[drr]^{#2}%

\ar@{.>}[dr]|-{#3} \\}
\newtheorem{clm}{Claim}[section]
\theoremstyle{plain}
\newtheorem{pro}[clm]{Proposition}
\newtheorem{lem}[clm]{Lemma}
\newtheorem{thm}[clm]{Theorem}
\theoremstyle{definition}
\newtheorem{defn}[clm]{Definition}
\newtheorem{exm}[clm]{Example}
\newtheorem{rmk}[clm]{Remark}
\theoremstyle{remark}
\newcommand{\cat}[1]{\ensuremath{\mathbf{#1}}}
\newcounter{ncomm}
\newcommand{\ltsred}[1]
{ \setbox0=\hbox{$\ {}^{#1}\ $}
  \setbox1=\hbox{$\longrightarrow$}
  \loop\setbox1=\hbox{$-$\kern-0.3em\unhbox1}\ifdim\wd1<\wd0\repeat
  \hbox{$\ \ \mathop{\box1}\limits^{#1}\ \ $}
}
\newcommand{\arx}[2]{\!\xymatrix@=15pt{\ar[r]^{{#1}}_{{#2}}&}\!}
\newlength{\mylength}
\newcommand{\syntax}[1]{\mathbf{S}_{#1}}
\newcommand{\FTerm}[1]{\mathbf{FTerm}_{#1}}
\newcommand{\Hyp}[1]{\mathbf{Hyp}_{#1}}
\newcommand{\BA}[1]{\ensuremath{\textbf{NB}_{#1}}\xspace}
\newcommand{\precU} {\ensuremath{\preceq_{U}}\xspace}
\newcommand{\precM} {\ensuremath{\preceq_{M}}\xspace}
\newcommand{\precL} {\ensuremath{\preceq_{L}}\xspace}
\newcommand{\precmu}{\ensuremath{\preceq_{\mu}}\xspace}
\newcommand{\precnu}{\ensuremath{\preceq_{\nu}}\xspace}
\def\bR{\begin{color}{red}}
\def\bB{\begin{color}{blue}}
\def\bM{\begin{color}{magenta}}
\def\bC{\begin{color}{cyan}}
\def\bW{\begin{color}{white}}
\def\bBl{\begin{color}{black}}
\def\bG{\begin{color}{green}}
\def\bY{\begin{color}{yellow}}
\def\e{\end{color}\xspace}
\newcommand{\cgr}[2][scale=0.45]{\raisebox{0.1em}{\begingroup
\setbox0=\hbox{\includegraphics[#1]{graffles/#2}}%
\parbox{\wd0}{\box0}\endgroup}}
\def \poi {\,\ensuremath{;}\,}
\def \df {\ensuremath{:=}}
\def \tns {\ensuremath{\oplus}}
\def \: {\colon}
\newcommand{\fznote}[1]{\marginpar{{\bf F.Z.} #1 }}    
\newcommand{\X}{\mathbf{X}}
\newcommand{\Y}{\mathbf{Y}}
\newcommand\Bmult{\lower4pt\hbox{$\includegraphics[width=17pt]{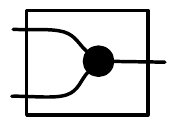}$}}
\newcommand\Bcomult{\lower4pt\hbox{$\includegraphics[width=17pt]{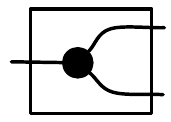}$}}
\newcommand\Bunit{\lower4pt\hbox{$\includegraphics[width=14pt]{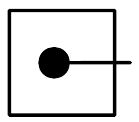}$}}
\newcommand\Bcounit{\lower4pt\hbox{$\includegraphics[width=14pt]{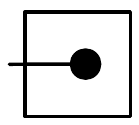}$}}
\newcommand\Wmult{\lower4pt\hbox{$\includegraphics[width=17pt]{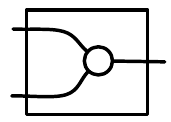}$}}
\newcommand\Wcomult{\lower4pt\hbox{$\includegraphics[width=17pt]{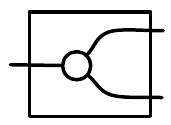}$}}
\newcommand\Wunit{\lower4pt\hbox{$\includegraphics[width=14pt]{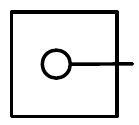}$}}
\newcommand\Wcounit{\lower4pt\hbox{$\includegraphics[width=14pt]{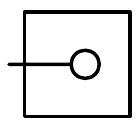}$}}
\newcommand{\SynToCsp}[1]{\ensuremath{\lfloor\!\lfloor{#1}\rfloor\!\rfloor}}    
\newcommand{\rrule}[2]{\ensuremath{\langle #1,#2 \rangle}}
\def \frob {\mf{Frob}}
\newcommand{\out}[1]{\mathsf{out}(#1)}
\newcommand{\inp}[1]{\mathsf{in}(#1)}
\newcommand{\lout}[1]{\mathsf{lout}(#1)}
\newcommand{\linp}[1]{\mathsf{lin}(#1)}
\newcommand{\rout}[1]{\mathsf{rout}(#1)}
\newcommand{\rinp}[1]{\mathsf{rin}(#1)}
\newcommand{\node}{\lower0pt\hbox{$\includegraphics[width=6pt]{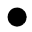}$}}
\newcommand{\hyperedge}{\lower2pt\hbox{$\includegraphics[width=25pt]{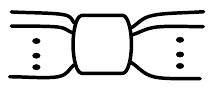}$}}
\newcommand{\ZeronetT}{\lower4pt\hbox{$\includegraphics[width=14pt]{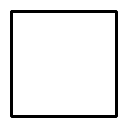}$}}
\def \Mon {\mathbf{M}}
\newcommand\idncircuit{\lower4pt\hbox{$\includegraphics[width=18pt]{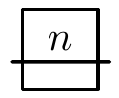}$}}
\newcommand{\rigidDPOstep}[1]{\Rrightarrow_{#1}}
\def \dfop {\ \ensuremath{=:}\ }
\def \NCB {\mathbf{NB} }
\title{Rewriting modulo symmetric monoidal structure}
\author{Filippo Bonchi, Fabio Gadducci, Aleks Kissinger \\
Pawel Sobocinski, Fabio Zanasi}
\begin{document}

\setlength{\pdfpageheight}{\paperheight}
\setlength{\pdfpagewidth}{\paperwidth}



\maketitle

\begin{abstract}
String diagrams are a powerful and intuitive graphical syntax for terms of symmetric monoidal categories (SMCs). They find many applications in computer science and are becoming increasingly relevant in other fields such as physics and control theory.

An important role in many such approaches is played by equational theories of diagrams, typically oriented and applied as rewrite rules. This paper lays a comprehensive foundation of this form of rewriting. We interpret diagrams combinatorially as typed hypergraphs and establish the precise correspondence between diagram rewriting modulo the laws of SMCs on the one hand and double pushout (DPO) rewriting of hypergraphs, subject to a soundness condition called convexity, on the other. This result rests on a more general characterisation theorem in which we show that typed hypergraph DPO rewriting amounts to diagram rewriting modulo the laws of SMCs with a chosen special Frobenius structure.

We illustrate our approach with a proof of termination for the theory of non-commutative bimonoids.
\end{abstract}

\section{Introduction}
\enlargethispage{\baselineskip}
\enlargethispage{\baselineskip}
\enlargethispage{\baselineskip}

 Symmetric monoidal categories (SMCs) are algebraic structures that feature sequential ($\poi$) and parallel ($\tns$) composition. This phenomenon seems commonplace, and indeed SMCs have found many applications in computer science, physics and related fields. Focussing on computer science, they have recently featured in concurrency theory, where they describe the concurrent nature of
 executions of Petri nets~\cite{Meseguer1990} as well as serving as their compositional algebra~\cite{Bruni2011, Sobocinski2014}, quantum information, where they have been used to model quantum circuits~\cite{Coecke2008, Coecke2012}, and in systems theory, where they provide a calculus of signal flow graphs~\cite{Bonchi2014b,Bonchi2015,BaezErbele-CategoriesInControl}. 

In each case, the algebra of SMCs gives us a syntax to talk about domain-specific artefacts. However,
 the two composition operations in an SMC are related by functoriality, and symmetries are natural: this imposes a non-trivial structural equality relation on terms from the outset---something that researchers in process algbera might refer to as a \emph{structural congruence}---that makes using ordinary tree-like syntax ineffectual. Functoriality, in particular, means that, given terms $A, B, C, D$ where $A$,$B$ and $C$,$D$ can be composed sequentially, we have that
\[ (A \tns C) \poi (B\oplus D)
=
(A \poi B) \tns (C \poi D).
\]
This equation is sometimes reffered to as the \emph{middle-four interchange}.
As a consequence, the syntax is intrinsically 2-dimensional, and for this reason diagrams---in this context often referred to as \emph{string diagrams}---are a more efficient representation for arrows of SMCs. Using string diagrams, both sides of the equation above are represented diagrammatically as
\[
\cgr[height=1cm]{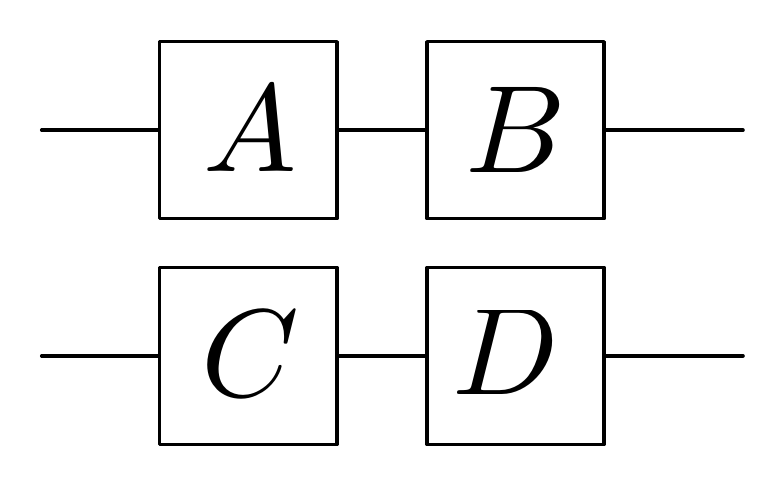}
\]
and so middle four interchange is built into the representation, along with other equational properties such as associativity of both composition operations.

String diagrams has been pioneered by Feynman and Penrose, but they remained just a tool for calculations, ultimately excluded from papers. This was likely due to a
lack of foundational
results that justified the soundness of their use: the careful mathematician checked each step
in a diagrammatic proof using standard term-based means.
%
%
%
This changed with the 1991 paper~\cite{Joyal1991} of Joyal and Street who formalised diagrams as topological structures and understood diagrammatic manipulation as homotopy.  Their framework allowed them to show that the resulting diagrams-up-to-homotopy-equivalence served as a description for the arrows of free braided monoidal categories, of which SMCs are a special case. Subsequently, the use of diagrammatic notation exploded, see e.g. the survey~\cite{Selinger2009}.
The results of Joyal and Street mean that we have a formal description of the nature of 2-dimensional syntax, and so of the arrows of free braided monoidal categories.

Most applications, however, do not feature free categories but rather rely on the presence of additional equations: for example, algebraic structures such as bimonoids and Frobenius monoids are commonplace.
Adding equations to a theory of string diagrams means that diagrammatic proofs include \emph{rewriting}: if the left hand side of an equation can be found in a larger string diagram, it can be deleted and replaced with its right hand side. Rewriting plays a similar role if string diagrams represent the topology of a system that may change dynamically during execution. From a mathematical point of view, one can formulate rewrite rules as generator 2-cells (this data structure is variously called a \emph{computad}~\cite{Street-computads} or a \emph{polygraph}~\cite{Burroni1993}) and consider the resulting free 2-category, where the 2-cells witness the possible rewriting trajectories. This does not solve the problem of how to \emph{implement} rewriting, and the approach of Joyal and Street does not offer an immediate solution either; we do not have an off-the-shelf rewriting theory for their diagrams.

One of the fundamental difficulties with working with terms modulo the laws of SMCs is finding matches. For example, consider the following rewrite rule
\[
\cgr[height=.7cm]{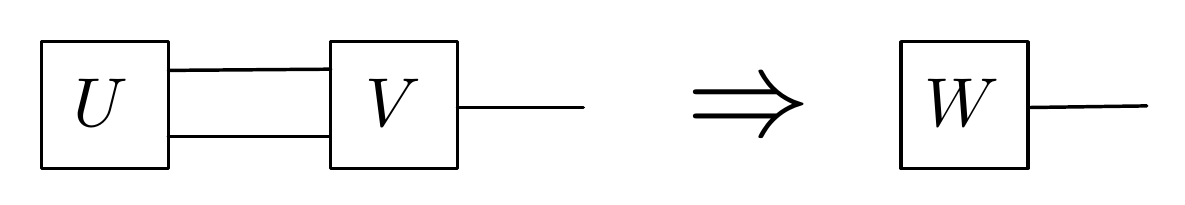}
\]
then, using naturality, we ought to be able to find a match in
\[
\cgr[height=1.7cm]{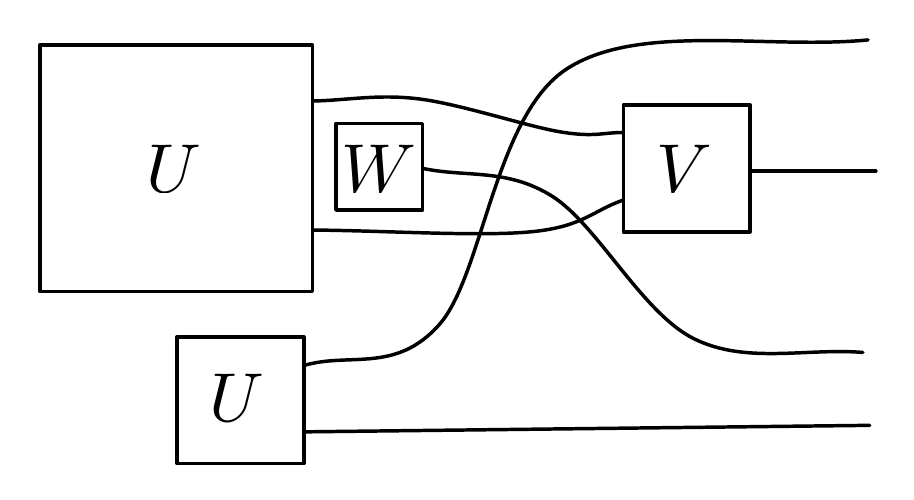}
\]
which, viewed as a term, does not contain the left hand side as a subterm, and requires a non-trivial deformation in order for this to be the case.

\medskip

Our approach is to think of string diagrams neither as terms nor as topological entities, but rather as combinatorial structures: i.e. as certain ``open'' (hyper-)graphs. Roughly speaking---since we are considering the symmetric monoidal case---the geometry of diagrams is discarded, and only connectivity information remains. Rewriting of diagrams then becomes an instance of graph rewriting, in particular the double pushout (DPO) approach, a well-trodden research topic that goes back to the 1970s~\cite{Ehrig1976}. The correspondence between rewriting modulo the laws of SMC and DPO is surprisingly tight, as we explain below.

To establish the connection between rewriting of string diagrams on the one hand and graph rewriting on the other, we extend and connect together two existing research threads. First, there has been work on understanding the algebraic nature of cospans of graphical structures e.g.~\cite{Rosebrugh2005}, where the algebra of special Frobenius monoids was shown to play a crucial role. In a cospan  $n \to G \leftarrow m$, the open nature of graphs is given by the `ìnterfaces'' $n$ and $m$ which, like ``dangling wires'' in string diagrams, allow composition on the left and on the right.

\noindent \begin{minipage}{0.85\linewidth}
Second, there has been work connecting computads in cospans~\cite{Sassone2005a,Gadducci1998} and DPO graph rewriting:
the key observation is that DPO
rules, which are usually presented as spans of graphs $L\leftarrow j \rightarrow R$, corresponds to rewrite rules in the cospan category as displayed on the right, where $0$ is the initial object in the category of graphs.
\end{minipage}
\begin{minipage}{0.15\linewidth}
\vspace{-0.5cm}
$$\xymatrix@C=0.02cm@R=0.3cm{ & j   \ar@/^/[dr] \ar@/_/[dl] & \\
L   \ar@{=>}[rr] & & R\\
& 0 \ar@/^/[ul] \ar@/_/[ur] &
} $$
\end{minipage}

Our starting point is the extension of the result in~\cite{Rosebrugh2005} from graphs to hypegraphs. This is essential to tackle arbitrary symmetric monoidal signatures $\Sigma$: the category of cospans of hypergraphs with type $\Sigma$ is isomorphic to the free \textit{hypergraph category}, that is the SMC freely generated by $\Sigma$ together with a chosen special Frobenius structure. Hypergraph categories have previously been called well-supported compact closed categories by Walters~\cite{Walters-lecture-WSCC}.

By connecting this result with the observations in~\cite{Sassone2005a,Gadducci1998}, we are able to show that DPO is just rewriting modulo the laws of SMCs ``plus'' Frobenius. More formally, a DPO system on hypergraphs of type $\Sigma$ amounts to a rewriting system on the free SMC generated by $\Sigma$ together with a chosen special Frobenius structure. Intuitively speaking, the special Frobenius structure brings a self-dual compact closed structure, which in turn allows us to bend rules around: an arbitrary rewriting rule of cospans, as below on the left, can always be transformed into the DPO rule on the right.
$$
\xymatrix@C=0.02cm@R=0.2cm{ & j   \ar@/^/[dr] \ar@/_/[dl] & \\
L   \ar@{=>}[rr] & & R\\
& i \ar@/^/[ul] \ar@/_/[ur] &
} \qquad \qquad \qquad
\xymatrix@C=0.001cm@R=0.2cm{ & i+ j   \ar@/^/[dr] \ar@/_/[dl] & \\
L   \ar@{=>}[rr] & & R\\
& 0 \ar@/^/[ul] \ar@/_/[ur] &
}
$$
Next we tackle rewriting modulo the equations of SMCs in isolation. We can still consider DPO, but we must be more careful about rule application: since we may not have a compact closed structure on the algebraic side, it may be the case that a graph rewriting rule is unsound when considered as a rewrite of string diagrams. We introduce a restricted form of DPO rewriting, called convex DPO rewriting, and we prove that it is sound and complete with respect to rewriting modulo symmetric monoidal structure.
\[
\hspace{-.5cm}\cgr[height=3.7cm,width=9.5cm]{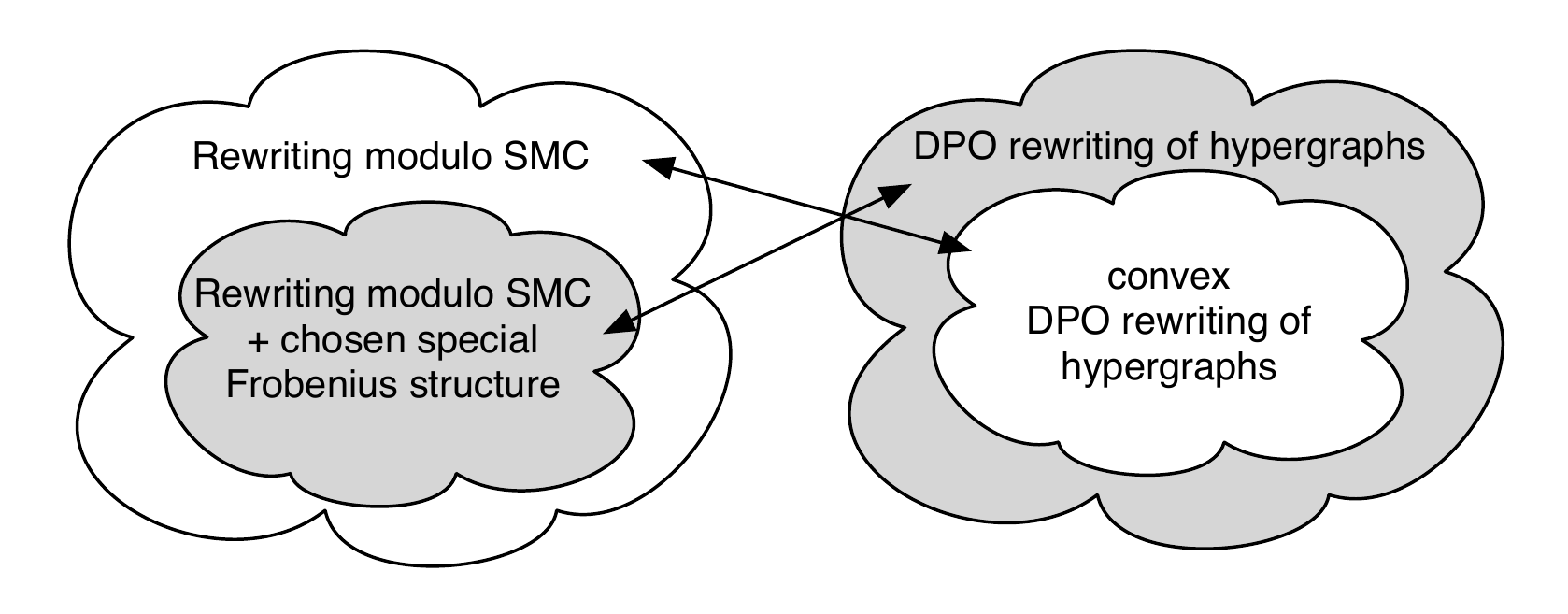}
\]
We conclude the paper with an illustrative example of our approach. We show termination for the theory of \emph{non-commutative bimonoids} (a.k.a. bialgebras). As any non-commutative group yields a bialgebra whose multiplication is non-commutative, these play an important role in representation theory, and the case where neither the multiplication nor the comultiplication is commutative forms the basis of the study of quantum groups~\cite{Kassel-quantumgroups2012}.

%

\paragraph{Related work.}

The correspondence between terms of algebraic structures with sequential and parallel composition (ultimately representing arrows of a free SMC) and flow diagrams (i.e., suitable hyper-graphs) has been recognized early on in computer science, and formalized at least since the work of Stefanescu (see the references in the survey~\cite{Selinger2009}). After~\cite{Joyal1991}, and especially after the paper on traced monoidal categories by Joyal, Street and Verity~\cite{Joyal_tracedcategories}, there has been a flourishing of interest in 2-dimensional diagrams. A 2-categorical equivalent for rewriting has also been devised almost immediately after, including the aforementioned correspondence with DPO rewriting~\cite{Gadducci1998}, even if usually for restricted kinds of diagrams. Our results duly extend those previous works.
On the one-side, the correspondence between string diagrams and SMCs (and their rewriting theories) is put on a theoretical firm
foundation, paving the way for a general theory encompassing different kind of diagrams, e.g. by enforcing constraints on the input and output degrees of nodes. On the other, it clearly recast the current development of PROP rewriting, establishing a fruitful link with the well-established, operational world of DPO rewriting.

A long tradition, pioneered by the work of Burroni on polygraphs~\cite{Burroni1993}, generalises term rewriting to higher dimensions, including the three-dimensional case of string diagram rewriting --- see e.g.~\cite{Mimram14} for a survey. In these works, the laws of SMCs are usually encoded as rewriting rules, resulting in rather elaborated rewriting systems whose analysis is often challenging (see e.g. \cite{Lafont2003,DBLP:journals/corr/abs-math-0612083}).
On the one hand, this traditional approach is far more general than ours that only deals with SMCs. On the other hand, our graphical representation allows to abstract away all the laws of SMCs, so to reduce considerably the number of rewriting rules and simplify the analysis of the rewriting systems.

In~\cite{open_graphs1}, the authors use cospans of \textit{string graphs} (a.k.a. open graphs) to encode morphisms in a symmetric monoidal category and reason equationally via DPO rewriting. There is an evident encoding of the hypergraphs we use to string graphs, which yields an equivalence between the category of directed cospans from~\cite{open_graphs1} and our subcategory of monogomous acyclic cospans in $\FTerm{\Sigma}$ (characterised in Theorem~\ref{thm:charactImage}). However, the notion of rewriting considered by those authors is more permissive, and is only sound in general when there is at least traced structure on the SMC.


\paragraph{Structure of the paper.} Section~\ref{sec:Background} provides some background on SMCs and the rewriting of the associated diagrams. Section~\ref{sec:HypInterpretation} introduces the combinatorial structures for interpreting diagrams and characterise them algebraically in terms of Frobenius monoids. Section~\ref{sec:dpoRewriting} properly establishes the correspondence between DPO rewriting and rewriting in a free SMC with a chosen special Frobenius monoid. Section~\ref{sec:SymMonDpoRewriting} characterises a restriction of DPO rewriting, called \emph{convex}, that is adequate for rewriting in any free SMC. Section~\ref{sec:terminationHopf} shows termination for the theory of non-commutative bimonoids. Appendix~\ref{app:proofs} contains the proofs omitted in the main text.

\section{Background}\label{sec:Background}

\paragraph{Notation.}
We write $f\poi g \: a \to c$ for composition of $f \: a \to b$ and $g\: b \to c$ in a category $\catC$.
For $\catC$ a symmetric monoidal category (SMC), $\tns$ is its monoidal product and $\sigma_{a,b} \: a \tns b \to b \tns a$ is the symmetry associated with $a,b \in \catC$. 
Given $\catC$ with pushouts, its cospan bicategory has the objects of $\catC$ as $0$-cells, cospans of arrows of $\catC$ as $1$-cells and cospan morphisms as $2$-cells. We denote with $\Cospan{\catC}$ the \emph{category} obtained by identifying the isomorphic $1$-cells and forgetting the $2$-cells.

\medskip

\begin{figure}[t]
 $$
\begin{array}{rcl}
(c \poi c') \poi d = c \poi (c' \poi d) & &
id_n \poi c = c = c\poi id_m\\
(c \tns c') \tns d = c \tns (c' \tns d) & &
id_0 \tns c = c  = c \tns id_0\\
\end{array} $$
$$
\begin{array}{c}
(c \poi c') \tns (d \poi d') = (c \tns d) \poi (c' \tns d')\\
\sigma_{1,1}\poi\sigma_{1,1}=id_2 \\
(c\tns id_z) \poi \sigma_{m,z} = \sigma_{n,z} \poi (id_z \tns c)
\end{array} $$
\caption{Laws of SMCs instantiated to a PROP $\X$.}\label{fig:axSMC}\end{figure}

\paragraph{SMTs and PROPs.} A standard way of expressing the algebraic structure borne in SMCs is through the notion of \emph{symmetric monoidal theory} (SMT). A one-sorted SMT is determined by $(\Sigma, E)$ where $\Sigma$ is the \emph{signature}: a set of \emph{generators} $o \: n\to m$ with \emph{arity} $n$ and \emph{coarity} $m$ where $m,n\in\N$. The set of $\Sigma$-terms is obtained by combining generators in $\Sigma$, the unit $\id \: 1\to 1$ and the symmetry $\sigma_{1,1} \: 2\to 2$ with $;$ and $\tns$. This is a purely formal process: given $\Sigma$-terms $t \: k\to l$, $u \: l\to m$, $v \: m\to n$, we construct new $\Sigma$-terms $t \poi u \: k\to m$ and $t \tns v \: k+n \to l+n$.  The set $E$ of \emph{equations} contains pairs of $\Sigma$-terms of the form $(t,t' \: i\to j)$; the only requirement is that $t$ and $t'$ have the same arity and coarity.

Just as regular (cartesian) algebraic theories have a categorical rendition as Lawvere categories~\cite{hyland2007category}, the corresponding (linear) notion for SMTs is the one of PROP~\cite{MacLane1965} (\textbf{pro}duct and \textbf{p}ermutation category). 
A PROP is a symmetric strict monoidal category with objects the natural numbers, where $\tns$ on objects is addition. Morphisms between PROPs are identity-on-objects strict symmetric monoidal functors. PROPs and their morphisms form a category $\PROP$.
Any SMT $(\Sigma,E)$ freely generates (presents) a PROP by letting the arrows $n\to m$ be the $\Sigma$-terms $n\to m$ modulo the laws of symmetric monoidal categories (in Figure~\ref{fig:axSMC}) and the (smallest congruence containing the) equations $t=t'$ for any $(t,t')\in E$.

When the set of equations $E$ is empty, we use $\syntax{\Sigma}$ to denote the PROP freely generated by $(\Sigma,E)$.
There is a natural graphical representation of these terms as string diagrams, which we now sketch referring to~\cite{Selinger2009} for the details. A $\Sigma$-term $n \to m$ is pictured as a box with $n$ ports on the left and $m$ ports on the right, to which we shall refer with top-bottom enumerations $1,\dots,n$ and $1,\dots,m$. Composition via $\poi$ and $\tns$ are rendered graphically by horizontal and vertical juxtaposition of boxes, respectively.
    \begin{eqnarray}\label{eq:graphlanguage}
t \poi s \text{ is drawn }
\lower5pt\hbox{$\includegraphics[height=.4cm]{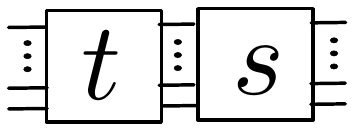}$}
\quad
t \tns s \text{ is drawn }
\lower10pt\hbox{$\includegraphics[height=.8cm]{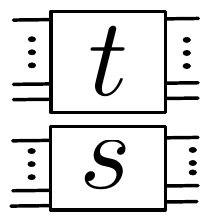}$}
\end{eqnarray}
    There are specific diagrams for the $\Sigma$-terms generating the underlying symmetric monoidal structure: these are $\id_1 \: 1 \to 1$, represented as $\Idnet$, the symmetry $\sigma_{1,1} \: 1+1 \to 1+1$, represented as $\symNet$, and the unit object for $\tns$, that is, $\id_0 \: 0 \to 0$, whose representation is an empty space $\ZeronetT$. Graphical representation for arbitrary identities $\id_n$ and symmetries $\sigma_{n,m}$ are generated according to the pasting rules in~\eqref{eq:graphlanguage}. Note that, of the equations displayed in Figure~\ref{fig:axSMC}, the first five are implicit in the diagrammatic language.

It will be sometimes convenient to represent $\id_n$ with the shorthand diagram $\idncircuit$ and, similarly, $t \: n \to m$ with $\lower5pt\hbox{$\includegraphics[height=15pt ]{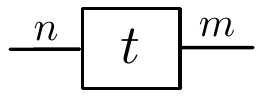}$}$.

\begin{exm}\label{exm:props}~
\begin{enumerate}[label=(\alph*)]
\item \label{ex:cmonoids} A basic example is the theory $(\Sigma_M,E_M)$ of \emph{commutative monoids}. The signature $\Sigma_M$ contains two generators:  \emph{multiplication} --- which we depict as the string diagram $\Bmult \: 2 \to 1$ --- and \emph{unit}, represented as $\Bunit \: 0 \to 1$.
 Equations in $E_M$ are given in the leftmost column of Figure~\ref{fig:EQFrobenius}: they assert commutativity, associativity and unitality. We call $\Mon$ the PROP freely generated by the SMT $(\Sigma_M,E_M)$. A useful observation is that we can give a concrete representation to $\Mon$: it is isomorphic to the PROP $\F$ with arrows $n \to m$ the functions $\{0,\dots, n-1\}\to \{0, \dots, m-1\}$, see e.g.~\cite{Lack2004a} for details.
\item \label{ex:frob} An example which will play a key role in our exposition is the theory $(\Sigma_F,E_F)$ of \emph{special Frobenius monoids}.
The signature $\Sigma_F$ contains four generators:  $\Bmult \: 2 \to 1$, $\Bunit \: 0 \to 1$, $\Bcomult \: 1\to 2$ and $\Bcounit \: 1\to 0$. 
Equations $E_F$ are displayed in Figure~\ref{fig:EQFrobenius}. They subsume the theory of commutative monoids, in the leftmost column.

Dually, the middle equations state that $\Bcomult$ and $\Bcounit$ form a cocommutative comonoid. Finally, the two rightmost equations describe the interaction between these two structures. 
We call $\frob$ the PROP freely generated by $(\Sigma_F,E_F)$. Just as $\Mon$, also $\frob$ enjoys a concrete representation: it is isomorphic to the PROP $\Cospan{\F}$ of cospans in $\F$~\cite{Lack2004a}. 
The isomorphism $\xi\colon \frob \to \Cospan{\F}$ is the unique PROP morphism defined as follows on the generators of $\Sigma_F$.
\begin{equation*}
\begin{array}{rclrcl}
\Bmult & \longmapsto & \left( 2\rightarrow 1 \leftarrow 1 \right) & \Bunit & \longmapsto & \left( 0\rightarrow 1\leftarrow 1 \right)\\
\Bcomult & \longmapsto & \left( 1\rightarrow 1 \leftarrow 2 \right) & \Bcounit & \longmapsto & \left( 1\rightarrow 1 \leftarrow 0\right)
\end{array}
\end{equation*}

\item \label{ex:ncbialgebras} The theory of \emph{non-commutative bimonoids} consists of a signature:
\[ \{\Wmult,\Wunit,\Wcomult,\Wcounit\}\]
and the following equations:
\begin{equation} \label{eq:ncbialgebralaws}
\lower55pt\hbox{$\includegraphics[height=3.7cm]{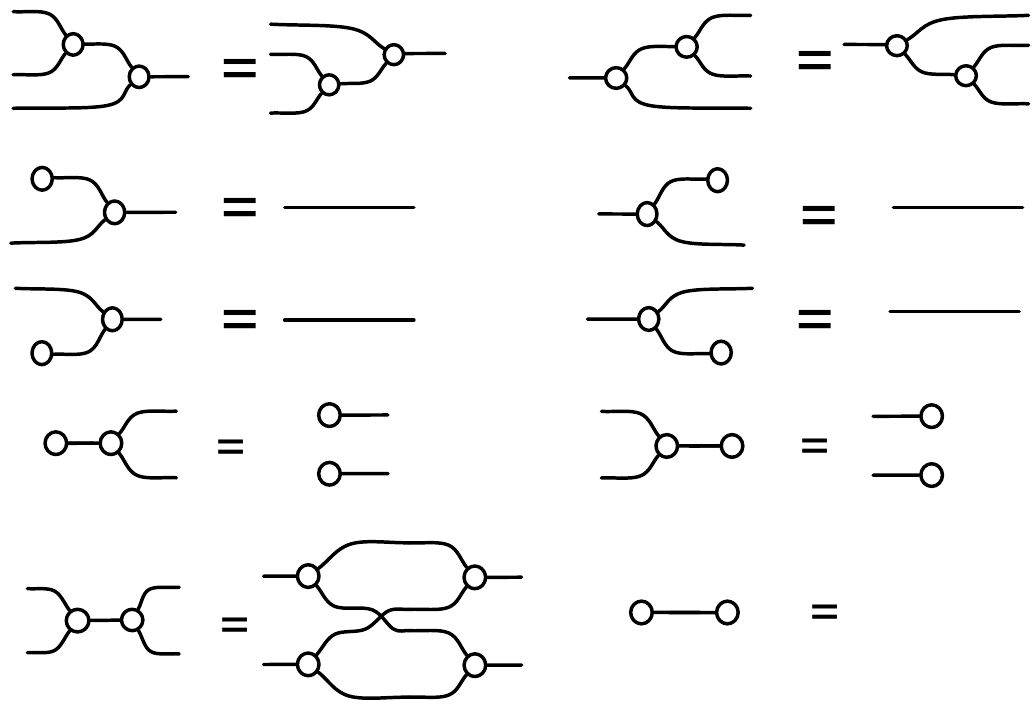}$}
\end{equation}
We call $\NCB$ the PROP freely generated by these data. Section~\ref{sec:terminationHopf} will show that the induced rewriting system terminates. For this purpose, it will be sometimes convenient to use labels $\mu,\eta,\nu,\epsilon$ resepctively to refer to the four generators of the signature.
\end{enumerate}
\end{exm}
 \begin{figure}[t]\label{fig:frob}
\[
\includegraphics[height=2cm]{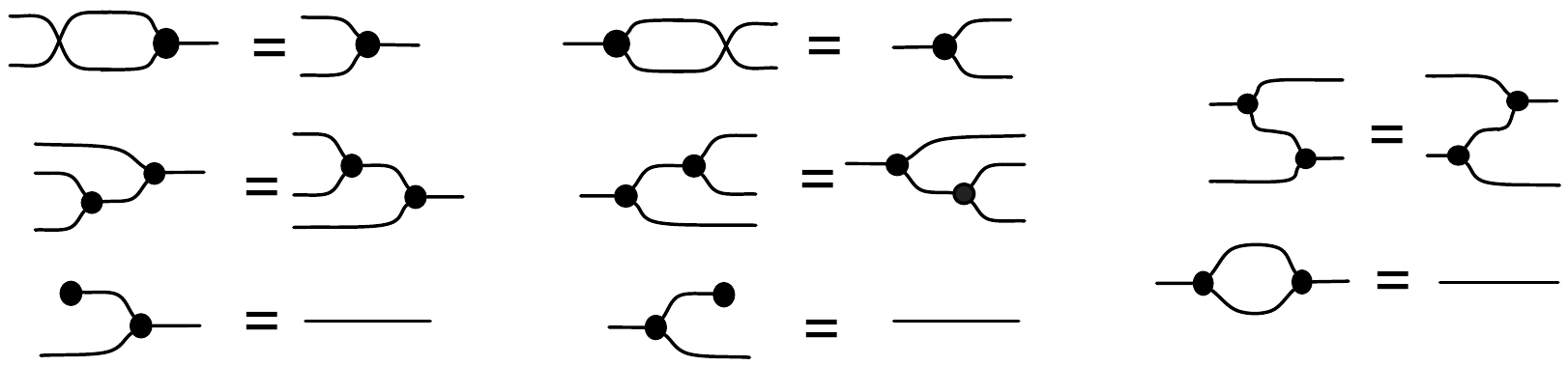}
\]
\caption{The equations $E_F$ of special Frobenius monoids.}\label{fig:EQFrobenius}
\end{figure}


\paragraph{Compact-closedness.} SMTs containing a Frobenius monoid will play a special role in our work. We say that an SMT $(\Sigma, E)$ is equipped with a \emph{chosen special Frobenius structure} if $\Sigma = \Sigma' + \Sigma_F$ and $E= E'+E_F$ for some signature $\Sigma'$ and set of equations $E'$. It is relevant for us that PROPs freely generated by such SMTs possess a self-dual compact closed structure~\cite{Kelly1980}. This amounts to the existence, for each $n$, of ``cups'' $\lccn \: n+n \to 0$ and ``caps'' $\rccn \: 0 \to n+n$, subject to the condition that
 \begin{align}\label{eq:snake}
 \lower9pt\hbox{$\includegraphics[height=.8cm]{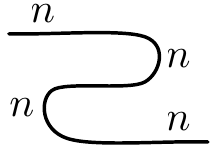}$} \ = \  \lower1pt\hbox{$\includegraphics[height=.25cm]{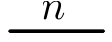}$}  \ = \  \lower9pt\hbox{$\includegraphics[height=.8cm]{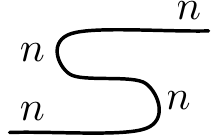}$}.
 \end{align}
 The equations of a Frobenius monoid allow to prove~\eqref{eq:snake} with cups defined according to the following pattern
 \begin{multicols}{2}
 \noindent
 \begin{eqnarray*}
\hspace{-.4cm}\lower8pt\hbox{$\includegraphics[width=18pt]{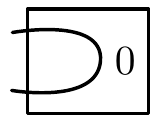}$} & \lower4pt\hbox{$\df$} & \quad  \lower4pt\hbox{$\ZeronetT$}
  \end{eqnarray*}
   \begin{eqnarray*}
  \lower0pt\hbox{$\includegraphics[width=18pt]{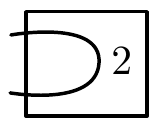}$} & \raise4pt\hbox{$\df$} & \lower6pt\hbox{$\includegraphics[height=.9cm]{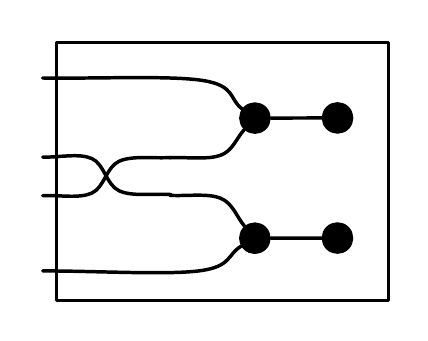}$}
 \end{eqnarray*}
   \begin{eqnarray*}
 \hspace{-.3cm}\lower4pt\hbox{$\includegraphics[width=18pt]{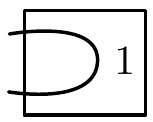}$} & \df & \ \ \lower7pt\hbox{$\includegraphics[height=.7cm]{graffles/rccr.pdf}$}
   \end{eqnarray*} \vspace{-.5cm}
   \begin{eqnarray*}
  \lower4pt\hbox{$\includegraphics[width=18pt]{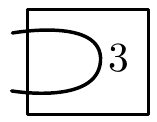}$} & \df & \lower12pt\hbox{$\includegraphics[height=1.1cm]{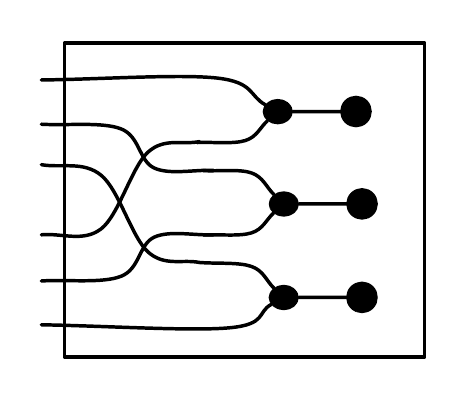}$}
   \end{eqnarray*}
 \end{multicols}
\noindent and similarly for caps, using instead $\Bcomult$ and $\Bunit$. 
The compact-closed structure also yields a contravariant PROP morphism $\coc{\cdot}$ (\emph{cf.} \cite[Rmk~2.1]{Selinger07DaggerCC}) defined as
\begin{align*}
 \lower8pt\hbox{$\includegraphics[height=.7cm]{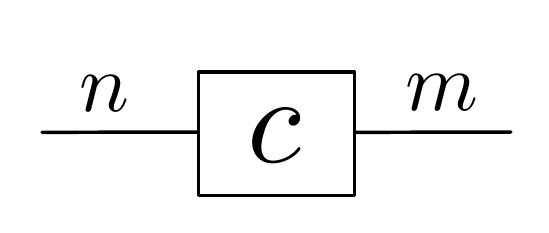}$} \ \ \ \ \mapsto \ \  \ \ \lower8pt\hbox{$\includegraphics[height=.7cm]{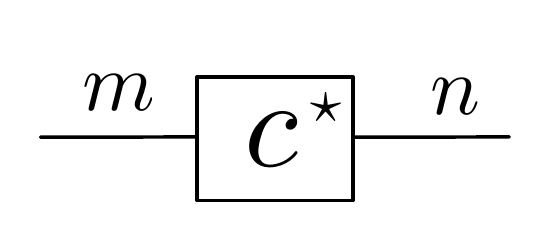}$} \ \df \  
 \lower10pt\hbox{$\includegraphics[height=.8cm]{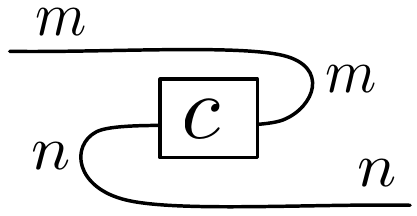}$}.
 \end{align*}

\medskip

\paragraph{Rewriting in a PROP.}  We now fix an arbitrary PROP $\X$. A rewriting \emph{rule} is a pair $\rrule{l}{r}$ where $l,r \: i \to j $ are arrows of $\X$ with the same source and target. We say that $i\to j$ is \emph{the type} of the rule $\rrule{l}{r}$, and sometimes write $\rrule{l}{r} \: (i,j)$. A \emph{rewriting system} $\mathcal{R}$ is a set of rewriting rules. The rewriting step $d \Ra_\mathcal{R} e$ is defined for any two arrows $d,e \: n \to m$ in $\X$ as   $d \Ra_{\mathcal{R}} e $ iff
$\exists \rrule{l}{r} \: (i,j) \in\mathcal{R}, c_1: n\to k+i, c_2: k+j \to n$ such that $d = c_1 \poi (id_k \tns r) \poi c_2$ and $e = c_1 \poi (id_k \tns r) \poi c_2$, i.e., diagrammatically:
\begin{eqnarray*}
 \cgr{circuitd.pdf} &=& \cgr[height=.75cm]{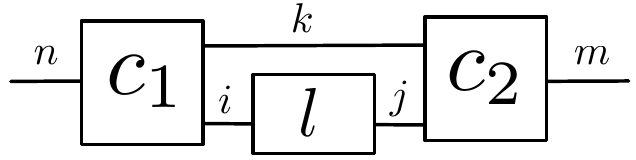} \\
 \cgr{circuite.pdf} &=& \cgr[height=.75cm]{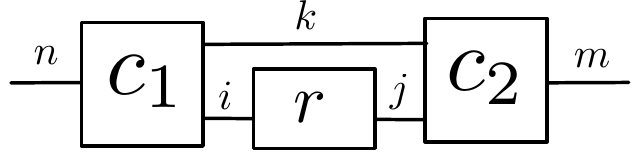}.
\end{eqnarray*}

When the PROP under consideration $\X$ is $\syntax{\Sigma}$ for some signature $\Sigma$, the above notion of rewriting step can be equivalently reformulated  as follows: in order to apply a rewrite rule $\langle l,r \rangle \: (i,j)$ to a diagram $d$ in $\syntax{\Sigma}$
we need to find a match of $l$ in $d$. This means that we need to find a \emph{context} $C$, namely a term in $\syntax{\Sigma + \{\bullet \: i \to j\}}$ with exactly one occurrence of $\bullet$, such that $d = C[l/\bullet]$.
The rewrite then takes $d\Ra_\mathcal{R} e$, where $e = C[r/\bullet]$.
%
%

\begin{rmk}
Another common way of viewing a rewrite system on (the arrows) a monoidal category is a to view a collection
of rules $\rrule{l}{r}:(i,j)$ as a computad~\cite{Street-computads} or polygraph~\cite{Burroni1993}. One can then consider the free 2-category on this data, in a similar way to how one obtains a free category on a directed graph.
\end{rmk}

With these definitions, diagrammatic reasoning can now be seen as a special case of rewriting. Given an arbitrary SMT $(\Sigma,E)$
we can obtain a rewriting system $\mathcal{R}_E$ as
\[
\mathcal{R}_E = \{\, \langle t,t'\rangle \,|\, (t,t') \in E \} \cup \{\, \langle t',t\rangle  \,|\, (t,t')\in E \,\}.
\]
\begin{pro}\label{pro:diagrammaticreasoning}
Let $c,d$ be two diagrams in $\syntax{\Sigma}$. Then $c = d$ in the PROP freely generated by $(\Sigma,E)$ iff
$c \Ra_{\mathcal{R}_E}^* d$.
\end{pro}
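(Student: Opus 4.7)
The plan is to prove the two implications separately, viewing each side of the biconditional as a relation on $\syntax{\Sigma}$ and exploiting that the PROP freely generated by $(\Sigma, E)$ is, by construction, the quotient of $\syntax{\Sigma}$ by the smallest congruence (with respect to $\poi$ and $\tns$) containing $E$.

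The reverse direction ($\Leftarrow$, \emph{soundness of rewriting}) is the easier one. By induction on the length of a rewrite sequence, it suffices to handle a single step $c \Ra_{\mathcal{R}_E} d$. Unfolding the definition, there exist $\langle l, r\rangle \in \mathcal{R}_E$, natural $k$, and contexts $c_1, c_2$ with $c = c_1 \poi (id_k \tns l) \poi c_2$ and $d = c_1 \poi (id_k \tns r) \poi c_2$. By the construction of $\mathcal{R}_E$, either $(l,r) \in E$ or $(r,l) \in E$; in either case $l = r$ holds in the quotient PROP, so functoriality of $\poi$ and $\tns$ yields $c = d$ in the quotient as well.

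For the forward direction ($\Rightarrow$, \emph{completeness of rewriting}), the strategy is to show that $\Ra^*_{\mathcal{R}_E}$ is itself a congruence on $\syntax{\Sigma}$ that identifies $l$ and $r$ for every $(l,r) \in E$; by minimality of the congruence defining the quotient PROP, this gives the desired inclusion. Reflexivity and transitivity hold by definition of reflexive-transitive closure, and symmetry follows from $\mathcal{R}_E$ being closed under swapping left and right sides. The inclusion of $E$ is witnessed by taking $k = 0$ and $c_1 = id = c_2$ in the rewrite step definition. The substantive work is congruence with respect to the two compositions.

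The main obstacle is this congruence property, and closure under $\tns$ is where some care is required. For $\poi$, given $d \Ra_{\mathcal{R}_E} d'$ with witnessing context $(c_1, c_2, k)$ around $\langle l, r\rangle$, pre- or post-composing with any $f$ simply extends the context to $(f \poi c_1, c_2, k)$ or $(c_1, c_2 \poi f, k)$, which is again of the required shape. For $\tns$, to produce a rewrite step from $d \tns f$ to $d' \tns f$, one starts from $(c_1 \tns id) \poi ((id_k \tns l) \tns id) \poi (c_2 \tns id)$ and must reshape this into the canonical form $c'_1 \poi (id_{k'} \tns l) \poi c'_2$. This is achieved using the middle-four interchange plus a symmetry $\sigma$ to permute the identity factor on the tensored copy of $f$ past the occurrence of $l$, absorbing the resulting symmetries into the enlarged $c'_1$ and $c'_2$. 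Since these equations are part of the SMC laws and thus already implicit in $\syntax{\Sigma}$, the reshaping is legitimate, yielding the required single rewrite step (and likewise on the other side).
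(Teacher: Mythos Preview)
The paper actually states this proposition without proof; it is treated as a folklore observation linking equational reasoning in an SMT to the rewriting relation $\Ra_{\mathcal{R}_E}^*$. So there is no ``paper's own proof'' to compare against here.

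Your argument is correct and is the standard one. The soundness direction is immediate, and for completeness you rightly reduce to showing that $\Ra_{\mathcal{R}_E}^*$ is a congruence on $\syntax{\Sigma}$ containing $E$, then invoke minimality of the congruence generated by $E$. Your handling of closure under $\tns$ is the only nontrivial point, and your sketch is sound: for $d\Ra d'$ one factors $d\tns f = (d\tns \id);(\id\tns f)$ via middle-four, then uses naturality of the symmetry to rewrite $\id_k\tns l\tns \id_p$ as $(\id_k\tns\sigma);(\id_{k+p}\tns l);(\id_k\tns\sigma^{-1})$, absorbing the extra permutations into the enlarged context. Since all of these manipulations are equalities already holding in $\syntax{\Sigma}$ (the SMC laws of Figure~\ref{fig:axSMC} are quotiented out in the free PROP on $(\Sigma,\emptyset)$), the resulting single-step rewrite is legitimate. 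The dual case $f\tns d$ is in fact slightly easier, since $\id_q\tns(\id_k\tns l)=\id_{q+k}\tns l$ needs no symmetry at all.
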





\section{Frobenius termgraphs}\label{sec:HypInterpretation}

In order to implement rewriting of string diagrams, we shift our focus away from viewing them as essentially a shorthand for terms, in favour of considering them as combinatorial hypergraph-like structures. In the following let $\F$ be the PROP of functions, introduced in Example~\ref{exm:props}.\ref{ex:cmonoids}. We start by defining the category of hypergraphs.
\begin{defn}[Hypergraphs]
The category of finite directed hypergraphs $\Hyp{}$
is the functor category $\F^{\mathbf{I}}$ where
${\mathbf{I}}$ has as objects pairs of natural numbers $(k,l) \in \N \times \N$ together with one extra object $\star$. 
 For each $k,l\in\N$, there are $k+l$ arrows from $(k,l)$ to $\star$.
\end{defn}

The idea is that an object $f$ of $\F^{\mathbf{I}}$ is the hypergraph with set of \emph{nodes} $f(\star)$
and, for each $k,l$, $f(k,l)$ the set of \emph{hyperedges} with $k$ (ordered) sources and $l$ (ordered) targets.
We shall visualise hypergraphs as follows: $\node$ is a node and $\hyperedge$ is an hyperedge, with tentacles attached to the left boundary linking to sources and the ones on the right linking to targets. Here is an example.
\[\lower12pt\hbox{$\includegraphics[height=1cm]{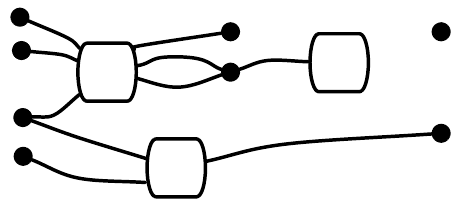}$}\]

In order to serve as an interpretation for string diagrams, we need to extend such vanilla hypergraphs in two ways, leading to the notion of \emph{Frobenius termgraph} (Def.~\ref{def:frobtermgraph}). The first is to assign hyperedges to the generators in a given signature. Formally, this is achieved as follows. Any symmetric monoidal signature $\Sigma$  can be considered as a hypergraph with a single node, in the obvious way. We can then express $\Sigma$-typed hypergraphs as the objects of the slice category $\Hyp{\Sigma} \Defeq \Hyp{}/\Sigma$. The typing is represented pictorially by labeling hyperedges with generators in $\Sigma$.
\[\lower12pt\hbox{$\includegraphics[height=1cm]{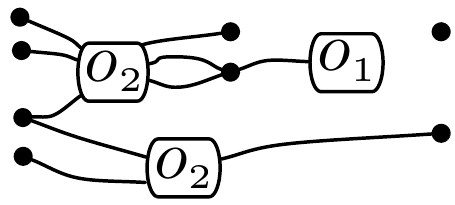}$}\]

Henceforward we shall only work in the slice, thus referring to its objects directly as ($\Sigma$-)hypergraphs.

The second piece of data needed to properly represent string diagrams is to identify the ``left and right dangling wires''. A natural solution is to use \emph{discrete cospans}, that is, the full subcategory of $\Cospan{\Hyp{\Sigma}}$ (the \emph{category} of cospans)\footnote{Since $\F$ has pushouts, so does $\Hyp{}$, where they are calculated pointwise in the usual way.} with objects discrete hypergraphs, i.e. those hypergraphs with no edges.

Such discrete cospans can encode terms, termgraphs~\cite{TermGraphSurvey,GadducciCorradini-termgraph} (where variable sharing can occur), but also cyclic structures where, for example, an output of an operation can be fed back as an input. As we shall see (Theorem~\ref{thm:coproduct}), there is a deep connection with the theory of special Frobenius monoids; for this reason, we will refer to these discrete cospans as \emph{Frobenius termgraphs}.

\begin{defn}[Frobenius termgraphs] \label{def:frobtermgraph}
The PROP $\FTerm{\Sigma}$ of $\Sigma$-\emph{Frobenius termgraphs}
is the full subcategory of $\Cospan{\Hyp{\Sigma}}$ with objects the discrete hypergraphs.
 Discrete hypergraphs are in 1-1 correspondence with the objects of $\mathbf{F}$, i.e. the natural numbers, whence $\FTerm{\Sigma}$ is a PROP for the same reasons as $\Cospan{\F}$.
\end{defn}

The terminology \emph{Frobenius termgraph} is justified by the following algebraic characterisation of $\FTerm{\Sigma}$.
\begin{thm}\label{thm:coproduct}
$\FTerm{\Sigma} \cong \syntax{\Sigma} + \frob$
\end{thm}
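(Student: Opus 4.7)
The plan is to invoke the universal property of the coproduct in $\PROP$: giving a PROP morphism out of $\syntax{\Sigma} + \frob$ is the same as giving a pair of PROP morphisms, one from each summand. The obvious candidate is $\Phi \df [\Phi_\Sigma,\Phi_F]$, where $\Phi_\Sigma \: \syntax{\Sigma} \to \FTerm{\Sigma}$ sends each generator $o \: n \to m$ of $\Sigma$ to the cospan $n \to H_o \leftarrow m$ whose apex $H_o$ has a single $o$-labelled hyperedge, with legs identifying the boundary $n$ (resp.\ $m$) with the $n$ sources (resp.\ $m$ targets) of that hyperedge, and where $\Phi_F \: \frob \to \FTerm{\Sigma}$ is the composite $\frob \xrightarrow{\xi} \Cospan{\F} \hookrightarrow \FTerm{\Sigma}$ of the isomorphism $\xi$ of Example~\ref{exm:props}.\ref{ex:frob} with the inclusion of discrete cospans of sets as edgeless cospans of $\Sigma$-hypergraphs. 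The content of the theorem is that the induced $\Phi$ is bijective on each hom-set.

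For fullness, I would give an explicit three-layered decomposition of an arbitrary Frobenius termgraph $c = (n \to G \leftarrow m)$. Enumerate the hyperedges of $G$ as $e_1,\ldots,e_k$ with $e_i$ labelled by $o_i \: a_i \to b_i$, and let $N$ be the set of nodes of $G$. Then $c$ factors in $\FTerm{\Sigma}$ as $(n \to N \leftarrow \sum_i a_i) \poi (o_1 \tns \cdots \tns o_k) \poi (\sum_i b_i \to N \leftarrow m)$: the outer cospans are discrete (hence in the image of $\Phi_F$) and record how the boundary ports and the tentacles of each hyperedge attach to nodes, while the middle layer is a pure $\syntax{\Sigma}$-term. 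A direct pushout computation in $\Hyp{\Sigma}$ shows that composing these three cospans reconstructs $(n \to G \leftarrow m)$, so every Frobenius termgraph lies in the image of $\Phi$.

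Faithfulness is the main obstacle. The strategy is to show that every morphism of $\syntax{\Sigma} + \frob$ can be rewritten into the three-layered shape above using only the equations of SMCs and of $\frob$. The compact-closed structure furnished by the chosen Frobenius monoid, recalled just before this section, allows any $\Sigma$-generator nested inside Frobenius structure to be ``bent around'' cups and caps: the naturality of symmetries together with the Frobenius axioms suffice to transport all $\Sigma$-generators into a single parallel central layer, reshaping only the outer Frobenius fragments. Once in normal form, the remaining data are a multiset of $\Sigma$-generators, which $\Phi$ recovers as the labelled hyperedges of $G$, together with two flanking cospans of sets, pinned down by $\xi$ and the discrete part of $G$. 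Two terms with the same $\Phi$-image therefore admit a common normal form, hence are equal in $\syntax{\Sigma} + \frob$. The technical heart of the argument is a confluence/termination result for the rewriting that moves $\Sigma$-generators through Frobenius structure; conceptually, it amounts to exhibiting a distributive law of $\frob$ over $\syntax{\Sigma}$ whose induced composite PROP is precisely $\FTerm{\Sigma}$.
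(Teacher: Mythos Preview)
Your overall strategy---define $\Phi=[\Phi_\Sigma,\Phi_F]$ and prove it is bijective---is a legitimate route, but it differs from the paper's and, as written, contains a genuine error in the fullness step and leaves the hardest part of the faithfulness step unproved.

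\medskip

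\textbf{The decomposition for fullness is incorrect.} Your three-layer factorisation
\[
(n \to N \leftarrow \textstyle\sum_i a_i)\;\poi\;(o_1\tns\cdots\tns o_k)\;\poi\;(\textstyle\sum_i b_i \to N \leftarrow m)
\]
does \emph{not} reconstruct $G$. Take $G$ to be a single node $v$ with one $1\!\to\!1$ hyperedge forming a loop, and $n=m=0$. Your middle layer has two distinct nodes (the source and target of the generator cospan), and the outer layers glue the source to one copy of $v$ and the target to another; computing the colimit on nodes gives two nodes and an edge between them, not the loop. The problem is that your two copies of $N$ (the apices of $c_1$ and $c_3$) are never identified. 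The paper's decomposition avoids this by threading an explicit $\id_N$ through the middle layer, writing the factorisation as
\[
(n \xrightarrow{f} N \xleftarrow{[\id,j]} N+\tilde n)\;\poi\;\big(\id_N\tns \bigoplus_e \SynToCsp{\tau e}\big)\;\poi\;(N+\tilde m \xrightarrow{[\id,p]} N \xleftarrow{g} m),
\]
so that the outer Frobenius layers genuinely share the node set $N$. This fix is easy once you see it, but your ``direct pushout computation'' claim is false as stated.

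\medskip

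\textbf{On faithfulness and the overall approach.} The paper does not argue that $\Phi$ is bijective; instead it verifies directly that $\FTerm{\Sigma}$ satisfies the universal property of the coproduct. Given any PROP $\mathbf{X}$ with $\alpha\:\syntax{\Sigma}\to\mathbf{X}$ and $\beta\:\frob\to\mathbf{X}$, the (corrected) decomposition above expresses every Frobenius termgraph as a composite of pieces in the images of $\SynToCsp{\cdot}$ and $\psi$; since this decomposition is unique up to permutation of the hyperedges, the induced $\upsilon\:\FTerm{\Sigma}\to\mathbf{X}$ is well-defined and unique. This sidesteps entirely the normal-form argument in $\syntax{\Sigma}+\frob$ that you identify as ``the technical heart''. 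Your sketch for faithfulness---rewrite every term of $\syntax{\Sigma}+\frob$ to a three-layer normal form via confluence/termination, or equivalently exhibit a distributive law---is plausible but is a substantial piece of work that you do not carry out; the paper's approach simply never needs it.
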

Theorem~\ref{thm:coproduct} is crucial for our exposition as it serves as a bridge between algebraic and combinatorial structures. It provides a presentation by generators and relation for the PROP $\FTerm{\Sigma}$, which is the union of the SMTs generating $\syntax{\Sigma}$ and $\frob$.

We remark that Theorem 3.3 generalises results already appearing in the literature (see e.g.
 ~\cite[Proposition 3.2]{Rosebrugh2005} and~\cite[Theorem 12]{Gadducci1998}, and the references therein)
 which however dealt only with operations of type $1\rightarrow 1$. Here we consider operations
with arbitrary (co)arities, thus drawing a tighter connection with PROPs.

%
\begin{proof}[Proof of Theorem~\ref{thm:coproduct}]
It suffices to verify that $\FTerm{\Sigma}$ satisfies the universal property of coproducts in $\PROP$. First we define morphisms
\[
\SynToCsp{\cdot} \: \mathbf{S}_\Sigma \to \FTerm{\Sigma} \text{ and }
\psi: \frob \to \FTerm{\Sigma}.\]
 Since $\syntax{\Sigma}$ is the PROP freely generated by an SMT with no equations, it suffices to define $\SynToCsp{\cdot}$ on the generators: for each $o \: n \to m$ in $\Sigma$, we let $\SynToCsp{o}$ be the following cospan of type $n \to m$.
\begin{equation}\label{eq:phi}
\cgr[height=1.3cm]{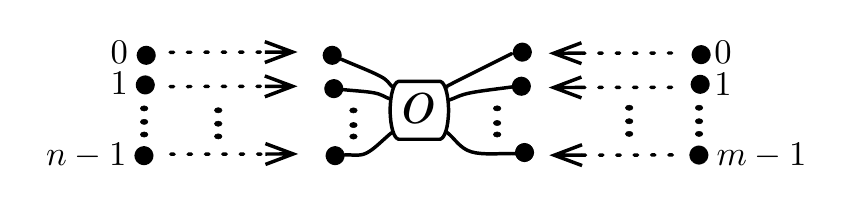}
\end{equation}
For the definition of $\psi$, recall the isomorphism $\frob \tr{\cong} \Cospan{\F}$ of Example~\ref{exm:props}.\ref{ex:frob}. We let $\psi$ be its composition with the trivial embedding $\Cospan{\F} \to \FTerm{\Sigma}$ given by regarding a set as a discrete graph.
Now, given a PROP $\mathbf{X}$ and morphisms $\alpha:\syntax{\Sigma}\to\mathbf{X}$, $\beta: \mathbf{Frob}\to \mathbf{X}$ we must show that there exists a unique morphism $\upsilon: \FTerm{\Sigma} \to \mathbf{X}$ making the following diagram commute.
\begin{equation}\label{eq:fterm}
\vcenter{\xymatrix@R=16pt{
{\syntax{\Sigma}} \ar[dr]_\alpha \ar[r]^-{\SynToCsp{\cdot}} &
{\FTerm{\Sigma}} \ar@{.>}[d]^<<<\upsilon
& \ar[l]_-{\psi} {\mathbf{Frob}} \ar[dl]^\beta \\
& {\mathbf{X}}
}}
\end{equation}
To do this, we decompose---in an essentially unique way---any Frobenius termgraph into an expression where all the basic constituents lie in the image of $\phi$ and $\psi$. This is possible because we are working with discrete cospans: all of the action happens on the nodes, that intuitively serve as coat-hangers for the edges.

Let  $n \xrightarrow{f} G \xleftarrow{g} n$
be in $\FTerm{\Sigma}$.
Then $G=(N,E,\tau)$ where $N$ is the ordinal of nodes,
$E=\bigcup_{k,l\in \N} E_{k,l}$ is a family of (typed) hyperedges
and $\tau \: E\to \Sigma$ is the type morphism.
Since $n,m$ are discrete, we actually have a cospan of \emph{functions}
$n\xrightarrow {f} N \xleftarrow{g} m$.

Given a hyperedge $e\in E$, $\SynToCsp{\tau e}$ is as illustrated in~\eqref{eq:phi}.
Define
\begin{equation}
\label{eq:decompE}
\tilde{n}\xrightarrow{i} \tilde{E} \xleftarrow{o} \tilde{m} \ \df \ \bigoplus_{e\in E} \SynToCsp{\tau e}.
\end{equation}
 Intuitively, $\tilde{E}$ is the collection of the hyperedges of $G$, but disconnected, and
 $\tilde{n}$ and $\tilde{m}$ are the finite ordinals of
all the inputs and outputs, concatenated. There are the induced canonical maps $j:\tilde{n}\to N$ and
$p:\tilde{m}\to N$ that send a ``disconnected'' input or output node to the corresponding node in $N$.
Now consider the following composition of three arrows in $\FTerm{\Sigma}$.
\begin{equation}\label{eq:decomp}
\vcenter{\xymatrix@R=11pt@C=10pt{
& N & & N \tns \tilde{E} & & N\\
n \ar[ur]^{f} & & \ar[ul]_{[id\ j]} N+\tilde{n} \ar[ur]_{id \tns i} & & \ar[ul]^{id \tns o} N + \tilde{m} \ar[ur]^{[id\ p]} & & \ar[ul]_g m
}}
\end{equation}

It is straightforward to compute the resulting pushouts in $\F$ and conclude that we obtain
$n \xrightarrow{f} G \xleftarrow{g} m$ as a result. Indeed, it suffices to focus on the nodes (see Fig. \ref{fig:bigpushout} in appendix). Since the constituents of~\eqref{eq:decomp} are all in the image of $\phi$ and $\psi$, this means that
we can define $\upsilon$ on $G$: this assignment is well-defined and moreover unique since the decomposition is unique up-to permutation of $\tilde{E}$, and $\mathbf{X}$ is symmetric monoidal.
\end{proof}

We are interested in $\FTerm{\Sigma}$ as a combinatorial universe for rewriting in $\syntax{\Sigma}$. In the remainder of this section we thus focus on the coproduct injection $\SynToCsp{\cdot} \: \mathbf{S}_\Sigma \to \FTerm{\Sigma}$. Our first observation is that $\SynToCsp{\cdot}$ is \emph{faithful}; we refer the reader to Appendix~\ref{app:proofs} for the proof, which relies on properties of coproducts in $\PROP$.

\begin{pro}\label{prop:faithful}
$\SynToCsp{\cdot} \: \syntax{\Sigma} \to \FTerm{\Sigma}$ is faithful.
\end{pro}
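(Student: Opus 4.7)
The plan is to leverage the isomorphism $\FTerm{\Sigma} \cong \syntax{\Sigma} + \frob$ from Theorem~\ref{thm:coproduct}, under which $\SynToCsp{\cdot}$ corresponds (up to iso) to the first coproduct injection $\iota_1 \: \syntax{\Sigma} \to \syntax{\Sigma} + \frob$ in $\PROP$. The statement thus reduces to faithfulness of this coproduct injection.

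The standard categorical technique here is to construct a suitable ``splitting'' for $\iota_1$: a PROP $\mathbf{Y}$ together with PROP morphisms $\alpha \: \syntax{\Sigma} \to \mathbf{Y}$ and $\beta \: \frob \to \mathbf{Y}$ such that $\alpha$ is already faithful. The universal property of the coproduct then yields a unique $\upsilon \: \syntax{\Sigma} + \frob \to \mathbf{Y}$ with $\upsilon \circ \iota_1 = \alpha$, so the identity $\alpha = \upsilon \circ \SynToCsp{\cdot}$ combined with faithfulness of $\alpha$ immediately implies faithfulness of $\SynToCsp{\cdot}$. The existence of $\beta$ is equivalent to $\mathbf{Y}$ carrying a (chosen) special Frobenius structure, so the task boils down to producing an ambient PROP with Frobenius structure in which $\syntax{\Sigma}$ embeds faithfully.

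The most natural choice for $\mathbf{Y}$ is a free hypergraph category on $\Sigma$, or alternatively a free self-dual compact closed PROP over $\Sigma$, where Frobenius structure is built in by construction. Faithfulness of $\alpha \: \syntax{\Sigma} \to \mathbf{Y}$ then amounts to the known Joyal--Street-style fact that \emph{progressive} string diagrams form a full subcategory of general (compact-closed) ones, so that no genuine SMC-level equation is added when one passes to the compact closed envelope. Given this, one can apply the universal property as above and conclude.

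The main obstacle is precisely this faithfulness assertion for $\alpha$: one must locate or construct a specific ambient PROP where $\syntax{\Sigma}$ sits faithfully while admitting a Frobenius structure, and invoke (or reprove) the relevant progressive-diagram embedding result. A direct alternative, avoiding the appeal to a separate embedding theorem, is a syntactic/rewriting argument inside $\syntax{\Sigma} + \frob \iso \FTerm{\Sigma}$: using the concrete presentation of $\frob$ as $\Cospan{\F}$ from Example~\ref{exm:props}\ref{ex:frob}, one shows that any derivation equating two pure $\Sigma$-terms via Frobenius moves can be normalised into a derivation that uses only SMC laws, exploiting that the decomposition~\eqref{eq:decomp} supplied in the proof of Theorem~\ref{thm:coproduct} isolates the $\Sigma$-part from the discrete-cospan part in an essentially unique way.
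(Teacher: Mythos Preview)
Your reduction is correct and matches the paper: via Theorem~\ref{thm:coproduct}, the claim is exactly that the coproduct injection $\iota_1\colon\syntax{\Sigma}\to\syntax{\Sigma}+\frob$ in $\PROP$ is faithful. Where you diverge from the paper is in how you propose to establish \emph{that}.

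Your ``splitting'' strategy---find $\mathbf{Y}$ carrying a special Frobenius structure together with a faithful $\alpha\colon\syntax{\Sigma}\to\mathbf{Y}$---is sound in principle, but the candidate you suggest (the free hypergraph category on $\Sigma$, or the free self-dual compact closed PROP on $\Sigma$) \emph{is} $\syntax{\Sigma}+\frob$ up to iso. So ``faithfulness of $\alpha$'' is literally the statement to be proved, and you are back where you started unless you import an independent Joyal--Street coherence result for progressive diagrams inside compact closed ones. That is a legitimate external input, but it is a nontrivial dependency and you have only gestured at it; the alternative ``normalise the Frobenius moves away'' sketch is not a proof either.

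The paper takes a different, more structural route that sidesteps any $\Sigma$- or Frobenius-specific combinatorics. It isolates a general lemma: if $\X$ and $\Y$ are PROPs in which the unique morphism from the initial PROP $\PERM$ is faithful, then the coproduct $\X+\Y$ has faithful injections. The proof computes $\X+\Y$ as a pushout over $\PERM$ in $\PRO$ (using that $\PROP$ sits as a full subcategory of the coslice $\PERM/\PRO$) and then appeals to the amalgamation criterion of MacDonald--Scull for pushouts in $\Cat$: the legs are faithful whenever the span satisfies a ``3-for-2'' condition, which holds trivially here because every arrow of $\PERM$ is an isomorphism. Since both $\syntax{\Sigma}$ and $\frob$ are faithful in this sense, the proposition follows immediately.

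So: same first move, genuinely different endgame. Your approach would work but needs an external coherence theorem you have not supplied; the paper's approach trades that for a clean category-theoretic lemma about coproducts in $\PROP$ that is reusable and independent of the particular summands.
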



%

We conclude this section with a combinatorial characterisation of the image of $\SynToCsp{\cdot}$. A preliminary series of definitions introduces the relevant hypergraph notions: \emph{monogamicity} and \emph{acyclicity}.

\begin{defn}[Degree of a node]
The \textit{indegree} of a node $v$ in hypergraph $G$ is the number of pairs $(h,i)$ where $h$ is an hyperedge with $v$ as its $i$-th target. Similarly, the \textit{outdegree} of $v$ is the number of pairs $(h,j)$ where $h$ is an hyperedge with $v$ as its $j$-th source.
\end{defn}

\begin{defn}[Monogamicity] \label{def:monogamous} Given $m \tr{f} G \tl{g} n$ in $\FTerm{\Sigma}$, let $\inp{G}$ be the image of $f$ and $\out{G}$ the image of $g$. We say that $G$ is \emph{monogamous} if $f$ and $g$ are mono and, for all nodes $v$ of $G$,
\begin{align*}
\textrm{the indegree of $v$ is } & \begin{cases} 0 &\mbox{if } v \in \inp{G} \\
1 & \mbox{otherwise.} \end{cases} \\
\textrm{the outdegree of $v$ is } & \begin{cases} 0 &\mbox{if } v \in \out{G} \\
1 & \mbox{otherwise} \end{cases}
\end{align*}
\end{defn}
\begin{exm}
The following three cospans are \emph{not} monogamous.
\[1 \tr{} \cgr{nonmog1.pdf} \tl{} 1 \quad 1 \tr{} \cgr{nonmog2.pdf} \tl{} 1 \quad 1 \tr{} {\raise2pt\hbox{$\cgr{nonmog3.pdf}$}} \tl{} 1.\]
\end{exm}

\begin{rmk}
There is a more compact characterisation of monogamicity that uses the decomposition~\eqref{eq:decomp}: the factorised cospan is monogamous precisely when
\[
[ f \ p ] \: n+\tilde{m} \to V \text{ and }[ g\ j ] \: m+\tilde{n} \to V
\]
 are bijections.
\end{rmk}

%
%

The standard notion of \emph{directed path} from a node $v$ to a node $v'$ in a directed graph generalises to (directed) hypergraphs in the expected way. See Appendix~\ref{app:proofs} for the details.

\begin{defn}[Acyclicity] \label{def:directedacyclic}
    A hypergraph $G$ is \textit{directed acyclic} if there exists no directed path from a node to itself. We also call a cospan $n \tr{} G \tl{} m$ directed acyclic if the property holds for $G$.
\end{defn}

\begin{defn}[Convex sub-hypergraph] \label{def:convexsubhyp}
A sub-hypergraph $H \subseteq G$ is \textit{convex} if, for any nodes $v, v'$ in $H$ and any directed path $p$ from $v$ to $v'$, every hyperedge in $p$ must also be in $H$.
\end{defn}

\begin{lem}\label{lemma:convexfact}
    Let $m \rightarrow G \leftarrow n$ be a monogamous directed acyclic cospan and $L$ a convex sub-hypergraph. Then $L$ extends to a unique cospan $i \rightarrow L \leftarrow j$ such that $G$ factors as:
    \begin{equation}\label{eq:snd-factor}
    \left( n \tr{} C_1 \tl{} i\!+\!k \right) \poi\!\!\!\!\!
    \begin{array}{cc}
    \left( k \tr{id} k \tl{id} k \right)\\
    \tns \\
    \left( i \tr{} L \tl{} j \right)
    \end{array}
    \!\!\!\!\!\poi
    \left( j\!+\!k\tr{} C_2 \tl{} m \right)
    \end{equation}
    where all cospans in~\eqref{eq:snd-factor} are monogamous directed acyclic.
\end{lem}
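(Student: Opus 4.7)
My plan is to define an intrinsic cospan structure on $L$, partition the hyperedges of $G$ outside $L$ into a ``pre-$L$'' and a ``post-$L$'' half using the directed structure of $G$, and read off $C_1$, $C_2$, and the bypass width $k$ from this data. Throughout I will use monogamy of $G$, which assigns to every non-input node a unique incoming edge and to every non-output node a unique outgoing edge.

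First, equip $L$ with its cospan structure. Let $i$ consist of those nodes $v$ of $L$ that either lie in $\inp{G}$ or whose unique incoming hyperedge of $G$ is not an edge of $L$, and dually let $j$ consist of the analogous output nodes; order each set by a fixed convention, say the restriction of a topological order on $G$. By construction the resulting cospan $i \to L \leftarrow j$ is monogamous, and acyclicity is inherited from $G$. This choice is moreover forced: for the middle cospan of~(\ref{eq:snd-factor}) to be monogamous, the interface nodes of $L$ must be exactly those with in/out-degree zero inside $L$, which gives uniqueness once existence is established.

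Next, partition the hyperedges of $G$ not lying in $L$ by letting $E_2$ consist of those reachable from a node of $L$ by a directed path in $G$, and $E_1$ be the complement. The key observation, combining acyclicity with convexity, is that no edge $e \in E_2$ can itself reach a node of $L$: a path $v_1 \to \cdots \to e \to \cdots \to v_2$ with $v_1, v_2$ in $L$ would, by convexity of $L$, force $e$ to be an edge of $L$, contradicting $e \notin L$. Define the bypass nodes $k$ to consist of those nodes of $G$ outside $L$ that are simultaneously $E_1$-terminal (either in $\inp{G}$, or with incoming $G$-edge in $E_1$) and $E_2$-initial (either in $\out{G}$, or with outgoing $G$-edge in $E_2$); these are precisely the wires that skip past $L$. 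Let $C_1$ be the sub-hypergraph of $G$ whose edges are $E_1$ and whose nodes are the union of $\inp{G}$, the endpoints of $E_1$, $i$, and $k$, with cospan legs inherited from $G$; define $C_2$ dually. A short case analysis on the (at most one) incoming and outgoing edges of each node, which by monogamy lies in exactly one of $E_1$, the edges of $L$, or $E_2$ (or is absent), shows $C_1$ and $C_2$ are themselves monogamous, and acyclicity descends from $G$.

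Finally, verify the factorization~(\ref{eq:snd-factor}) by computing the two successive pushouts in $\Hyp{\Sigma}$, which on the node level reduce to pushouts in $\F$ along the identifications of $i$, $j$, and $k$ across the three components. The edges recombine as the disjoint union of $E_1$, the edges of $L$, and $E_2$, recovering the edge set of $G$; the node pushouts recover the node set of $G$ on the nose, so the composite cospan is isomorphic to $G$ in $\FTerm{\Sigma}$. The main obstacle is the convexity-plus-acyclicity argument ruling out hyperedges of $G$ outside $L$ that would ``straddle'' $L$; once this is secured, the remainder is routine bookkeeping about interface nodes and the cospan composition calculation.
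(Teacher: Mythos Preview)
Your proof is correct and follows essentially the same strategy as the paper: partition the hyperedges outside $L$ into a ``before'' and an ``after'' half via directed reachability, read off $i$, $j$, $k$ as the interface nodes, and invoke convexity to rule out straddling edges. The one cosmetic difference is the orientation of the partition: the paper lets $C_1$ collect edges with a path \emph{into} $L$ (so edges independent of $L$ default to $C_2$), whereas you let $E_2$ collect edges reachable \emph{from} $L$ (so independent edges default to $E_1$); this can yield different $C_1$, $C_2$, and $k$ when $G$ has components disconnected from $L$, but both are valid factorizations and the cospan on $L$ itself---the only thing claimed unique---agrees. Your intrinsic description of $i$ and $j$ as the degree-zero nodes of $L$ makes the uniqueness argument slightly more transparent than in the paper, which leaves it implicit.
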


\begin{thm}\label{thm:charactImage} $n \tr{} G \tl{} m$ in $\FTerm{\Sigma}$ is in the image of $\SynToCsp{\cdot}$ if and only if $n \tr{} G \tl{} m$ is monogamous directed acyclic. \end{thm}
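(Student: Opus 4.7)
My plan is to prove both implications by induction, forward on term structure and backward on the number of hyperedges.

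For the $(\Rightarrow)$ direction, I would induct on the syntactic structure of $\syntax{\Sigma}$. The base cases---identities $\id_1$, symmetries $\sigma_{1,1}$, and the generator cospans displayed in~\eqref{eq:phi}---are each trivially monogamous directed acyclic by inspection. For the inductive step, parallel composition $\SynToCsp{c_1 \tns c_2}$ is a disjoint union of cospans, so both properties are inherited componentwise. The only real work is in the sequential case, where I would check that in the pushout that computes $\SynToCsp{c_1 \poi c_2}$, each merged interface node inherits exactly one incoming edge (from $G_1$) and one outgoing edge (from $G_2$), preserving monogamy, and that no directed cycle is introduced because every directed path crosses the interface only left-to-right.

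For the $(\Leftarrow)$ direction, I would induct on the number of hyperedges of $G$. For the base case of zero hyperedges, monogamy forces every node to have indegree $0$ and outdegree $0$, so every node lies in both $\inp{G}$ and $\out{G}$, whence $f$ and $g$ are bijections between finite sets of the same cardinality. The resulting cospan is then a permutation $n \to n$, which is plainly in the image of $\SynToCsp{\cdot}$ since $\syntax{\Sigma}$ generates all permutations from $\sigma_{1,1}$ and $\id_1$.

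For the inductive step, the plan is to peel off a single hyperedge by means of Lemma~\ref{lemma:convexfact}. Choose a topologically minimal hyperedge $e$, which exists by acyclicity, and let $L$ be the sub-hypergraph consisting of $e$ together with its ordered sources $\inp{e}$ and targets $\out{e}$; equipped with the obvious cospan structure this is precisely $\SynToCsp{\tau e}$. The crucial step is showing $L$ is convex in $G$: by minimality of $e$, each source of $e$ has indegree $0$ (hence lies in $\inp{G}$) and by monogamy has outdegree $1$ realised by $e$ itself; and each target of $e$ has indegree $1$ realised only by $e$. A case analysis on directed paths between two nodes of $L$ (source-to-source, source-to-target, target-to-target) then shows each such path is either trivial or factors through $e$, using the fact that the endpoints are unreachable from elsewhere in $G$. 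Invoking Lemma~\ref{lemma:convexfact} yields a factorisation
\[
G \;=\; C_1 \poi (\id_k \tns L) \poi C_2
\]
with $C_1$ and $C_2$ monogamous directed acyclic and containing strictly fewer hyperedges than $G$; the induction hypothesis provides $\syntax{\Sigma}$-terms $t_1, t_2$ with $C_i = \SynToCsp{t_i}$, and then $G = \SynToCsp{t_1 \poi (\id_k \tns \tau e) \poi t_2}$.

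I expect the main obstacle to be the convexity verification: one must carefully exploit both topological minimality of $e$ and the in-/outdegree constraints of monogamy to rule out each potential ``shortcut'' path among nodes of $L$. A secondary subtlety is checking that the cospan $L$ produced by Lemma~\ref{lemma:convexfact} agrees on the nose with $\SynToCsp{\tau e}$, i.e.\ that the interface ordering supplied by the lemma coincides with the canonical ordering of sources and targets of $e$, rather than merely up to a permutation that would need to be absorbed into $C_1$ and $C_2$.
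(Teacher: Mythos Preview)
Your proposal is correct and follows essentially the same approach as the paper: induction on term structure for $(\Rightarrow)$, and induction on the number of hyperedges via Lemma~\ref{lemma:convexfact} for $(\Leftarrow)$. The one noteworthy difference is that you restrict to a \emph{topologically minimal} hyperedge to verify convexity, whereas the paper simply picks \emph{any} hyperedge $E$ and asserts that monogamicity plus acyclicity already force the single-edge subgraph to be convex. Indeed your own case analysis goes through without minimality: any path between two nodes of $L$ must begin with $e$ (the unique out-edge of a source node) or end with $e$ (the unique in-edge of a target node), and a nontrivial such path would close up into a directed cycle through $e$. So the minimality hypothesis is harmless but unnecessary, and dropping it removes the need to argue that the sources of $e$ lie in $\inp{G}$. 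Your secondary worry about interface ordering is legitimate but, as you note, any mismatch is a permutation that can be absorbed into $C_1$ and $C_2$; the paper does not comment on this point.
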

\begin{proof}
The only if direction is easily verifiable by induction on $d$ such that $\SynToCsp{d} = n \tr{} G \tl{} m$. For the converse direction, we can reason by induction on the number of hyperedges in $G$. If $G$ does not contain any, then monogamicity and acyclicity imply that $n\tr{}G$ and $m\tr{}G$ are bijections, so that $n \tr{} G \tl{} m$ is in the image of an arrow only consisting of identities and symmetries.
Otherwise, suppose that $G$ consists of a single hyperedge $E$: it must have a $\Sigma$-type, say the generator $o \in \Sigma$. Because $n \tr{} G \tl{} m$ is monogamous directed acyclic, $o$ must have source $n$ and target $m$ as an arrow of $\syntax{\Sigma}$ and $\SynToCsp{o \: n \to m} = n \tr{} G \tl{} m$.

For the inductive step, we pick any hyperedge $E$ of $G$. By monogamicity and acyclicity, $E$ must be a convex sub-hypergraph of $G$. Hence, by Lemma~\ref{lemma:convexfact}, $n \tr{} G \tl{} m$ factors as~\eqref{eq:snd-factor}, with $L$ being $E$.
    The lemma guarantees that all the above cospans are monogamous directed acyclic. Therefore, by inductive hypothesis they are in the image of $\SynToCsp{\cdot}$, allowing us to conclude by functoriality that the same holds for $n \tr{} G \tl{} m$.
\end{proof}

\section{DPO rewriting on Frobenius termgraphs}\label{sec:dpoRewriting}


In this section, we exploit Theorem~\ref{thm:coproduct} to establish an equivalence between rewriting in a PROP and \emph{DPO rewriting}, defined below.

\begin{defn}[DPO rewriting]\label{defn:dpo}
A \emph{DPO rule} is a span $L \xleftarrow{} j \xrightarrow{} R$  in $\Hyp{\Sigma}$ where $j$ is a  discrete hypergraph.
A \emph{DPO rewriting system} $\mathcal{R}$ is a set of DPO rules.
Given $d,e\colon 0\to m$ in $\FTerm\Sigma$, namely $d = 0 \to G\stackrel{}{\leftarrow} m$ and $e = 0\to H\stackrel{}{\leftarrow} m$, we write that $d \DPOstep{\mathcal{R}} e$ if there exist a rule $L \xleftarrow{ } j  \xrightarrow{} R$ in $\mathcal{R}$ and a cospan
$j \rightarrow D \leftarrow m$ such that the following diagram in $\Hyp{\Sigma}$ commutes and the two squares are pushouts.
\begin{equation}\label{eq:dpo2}
\raise25pt\hbox{$
\xymatrix@R=10pt@C=20pt{
L \ar[d]   &  j \ar[d]
\ar@{}[dl]|(.8){\text{\large $\urcorner$}}
\ar@{}[dr]|(.8){\text{\large $\urcorner$}}
\ar[l] \ar[r]  & R \ar[d] \\
 G &  C \ar[l] \ar[r]  & H \\
&  m \ar[u] \ar[ur]_q  \ar[ul]^p
}$}
\end{equation}
The arrow $L \to G$ is called a \emph{matching} of $L$ in $G$.
\end{defn}
The above definition, already appearing in~\cite{Ehrig2004}, extends the standard one~\cite{Ehrig1976} with the ``interface'' $m$: the standard definition can be retrieved by simply taking $m$ to be the empty graph $0$. For examples, we refer the reader to Example~\ref{ex:unsoundcontext} and~\ref{ex:ncbialgebrasrewriting}.

\medskip

The first step in relating PROP and DPO rewriting consists in observing that the latter coincides with \emph{ground} PROP rewriting.
\begin{defn}\label{defn:ground}
A \emph{ground} rewriting system on a PROP $\X$ is a collection of
rules $\rrule{l}{r} \: (i,j)$ with $i = 0$.
\end{defn}
\begin{lem}\label{lemma:groundrewritingfact}
Let $\mathcal{R}$ be a ground rewriting system on a PROP $\X$ and $d,e: 0 \to m$ be in $\X$. Then
$d \Ra_{\mathcal{R}} e$ if and only if there are $\rrule{l}{r} \: (0,j)\in\mathcal{R}$ and $c:\ j \to m $ such that 
\begin{equation}\label{eq:rewriting}
  d =  l \poi c \quad \text{ and } \quad e = r  \poi c \text{.}
\end{equation}
\end{lem}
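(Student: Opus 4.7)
The plan is to prove the two directions separately, with the reverse being immediate and the forward reducing to a single application of the middle four interchange. I do not expect any real obstacle here, since a ground rule has trivial input arity and therefore can be commuted past any ``context'' sitting to its left using functoriality of $\tns$.

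For the $(\Leftarrow)$ direction, given $c \: j \to m$ with $d = l \poi c$ and $e = r \poi c$, I would instantiate the general rewriting scheme from Section~\ref{sec:Background} by taking $k = 0$, $c_1 = id_0$, and $c_2 = c$. The identity laws $id_0 \tns l = l$ and $id_0 \poi l = l$ (visible in Fig.~\ref{fig:axSMC}) then immediately give $d = c_1 \poi (id_0 \tns l) \poi c_2$ and $e = c_1 \poi (id_0 \tns r) \poi c_2$, so $d \Ra_{\mathcal{R}} e$.

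For the $(\Rightarrow)$ direction, a general rewrite step yields a rule $\rrule{l}{r} \: (0,j) \in \mathcal{R}$ together with $c_1 \: 0 \to k$ and $c_2 \: k+j \to m$ such that $d = c_1 \poi (id_k \tns l) \poi c_2$ and $e = c_1 \poi (id_k \tns r) \poi c_2$. The key observation is that, because $l$ has input arity $0$,
\[
c_1 \poi (id_k \tns l) \;=\; c_1 \tns l \;=\; l \poi (c_1 \tns id_j),
\]
where both equalities are instances of the middle four interchange (Fig.~\ref{fig:axSMC}), combined with the fact that $id_0$ is the unit for $\tns$ (so $c_1 \tns id_0 = c_1$ and $id_0 \tns l = l$). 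Setting $c \df (c_1 \tns id_j) \poi c_2 \: j \to m$ then produces $d = l \poi c$, and since the argument only uses the type $0 \to j$ of the rule's left-hand side, the identical rearrangement with $r$ substituted for $l$ gives $e = r \poi c$ with the \emph{same} $c$. This completes both directions and hence the lemma.
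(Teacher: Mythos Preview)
Your proof is correct and follows essentially the same approach as the paper: both use the middle four interchange and the unit law for $id_0$ to slide $c_1$ past $l$ and absorb it into a single context $c = (c_1 \tns id_j) \poi c_2$. The paper presents this manipulation diagrammatically while you spell it out algebraically, but the content of the argument is identical.
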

\begin{proof}
Suppose that $d \Ra_{\mathcal{R}} e$. By definition $d$ is like below left. It is equal to the diagram below right by the equations of SMCs. We define the context $c \: j \to m$ as the dashed diagram.
$$\includegraphics[height=1cm]{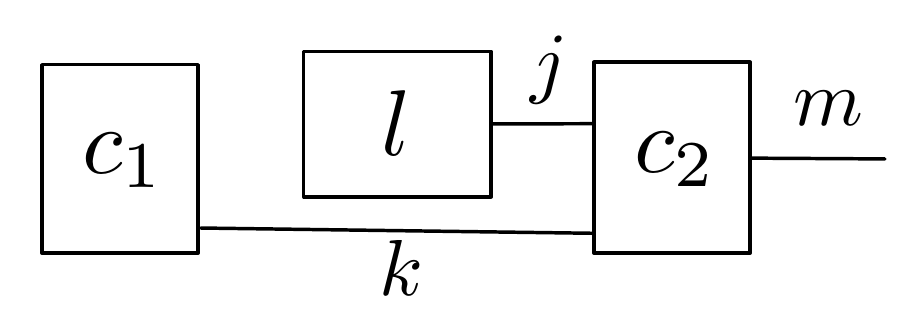} \qquad \qquad \qquad
\includegraphics[height=1.1cm]{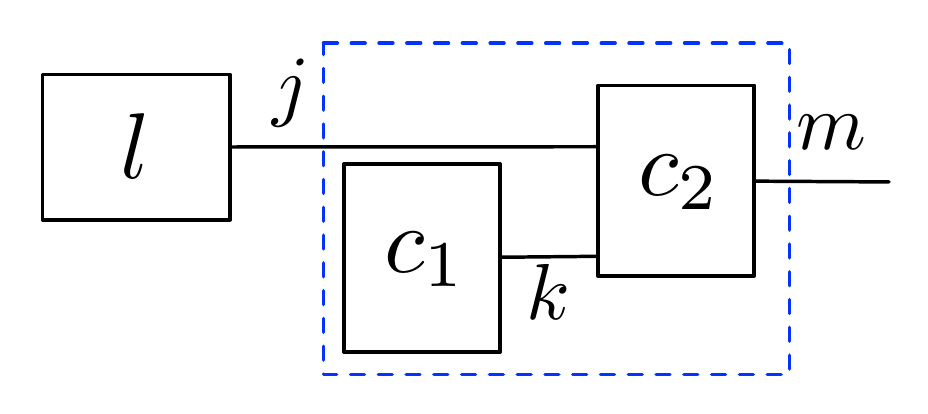}
$$
Similarly, we have $e = r \poi c$. The other direction is trivial.
\end{proof}


Observe that there is a 1-1 correspondence between ground systems in $\FTerm{\Sigma}$  and DPO systems. Indeed, any DPO rule
$L\xleftarrow{} j \xrightarrow{}R$
uniquely induces, by initiality of $0$ in $\Hyp{\Sigma}$, a ground rule $\rrule{0\rightarrow L \xleftarrow{}j}{0\rightarrow R\xleftarrow{}j}$, and viceversa. Moreover, this correspondence lifts to the rewrite relations.

\begin{thm}\label{thm:gadducciheckel}
Let $\mathcal{R}$ be a ground rewriting system in $\FTerm\Sigma$.
Then,
$ d \Ra_{\mathcal{R}} e$ iff $ d \DPOstep{\mathcal{R}} e$.
\end{thm}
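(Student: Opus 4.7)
The plan is to establish the equivalence by unpacking both notions and observing that cospan composition in $\FTerm{\Sigma}$ is computed by pushout, which is exactly the operation governing DPO rewriting. The correspondence between ground rules in $\FTerm{\Sigma}$ and DPO rules (via initiality of $0$ in $\Hyp{\Sigma}$) has already been noted in the paragraph preceding the theorem, so the substance of the proof is about the rewrite steps themselves.

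For the forward direction, I would start from $d \Ra_{\mathcal{R}} e$ and invoke Lemma~\ref{lemma:groundrewritingfact} to obtain a ground rule $\rrule{l}{r} \: (0,j) \in \mathcal{R}$ and a context $c \: j \to m$ in $\FTerm{\Sigma}$ such that $d = l \poi c$ and $e = r \poi c$. Writing $l = (0 \to L \leftarrow j)$, $r = (0 \to R \leftarrow j)$ and $c = (j \to C \leftarrow m)$, the composite $l \poi c$ is by definition the cospan $0 \to G \leftarrow m$ where $G$ is the pushout of $L \leftarrow j \to C$; analogously $r \poi c$ arises from the pushout of $R \leftarrow j \to C$. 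Pasting these two pushouts along their common span $L \leftarrow j \to C$ and $R \leftarrow j \to C$ with the induced maps $p\: m \to G$ and $q\: m \to H$ yields exactly the DPO diagram~\eqref{eq:dpo2}, so $d \DPOstep{\mathcal{R}} e$.

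The reverse direction just reads the same diagram in the opposite direction. Given a DPO step witnessed by~\eqref{eq:dpo2} with rule $L \leftarrow j \to R$ and interface cospan $j \to C \leftarrow m$, the fact that the two squares are pushouts says precisely that in $\FTerm{\Sigma}$ one has $d = (0\to G\leftarrow m) = (0\to L\leftarrow j) \poi (j\to C\leftarrow m) = l \poi c$ and similarly $e = r \poi c$, where $\rrule{l}{r}$ is the ground rule induced from $L \leftarrow j \to R$ by initiality of $0$. Applying Lemma~\ref{lemma:groundrewritingfact} in the other direction then gives $d \Ra_{\mathcal{R}} e$.

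I do not anticipate a serious obstacle; the proof is essentially a bookkeeping exercise contrasting composition of cospans (which is pushout) with the DPO construction (which is two pushouts sharing a common span). The only point that deserves explicit care is making sure that the ``ground'' restriction $i = 0$ is exactly what is needed so that the entire left-hand interface of the rule is absorbed, leaving only $m$ as the outer interface on both sides of the DPO diagram; this is the technical content of Lemma~\ref{lemma:groundrewritingfact} and is what makes the two formalisms align on the nose.
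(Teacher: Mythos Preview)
Your proposal is correct and follows the same approach as the paper: the paper's proof simply observes that the factorisation \eqref{eq:rewriting} from Lemma~\ref{lemma:groundrewritingfact} coincides with the DPO diagram~\eqref{eq:dpo2} once one takes the context $c$ to be the cospan $j \to C \leftarrow m$ and recalls that composition in $\FTerm{\Sigma}$ is by pushout. Your version spells out the two directions in more detail, but the underlying argument is identical.
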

\begin{proof}
The conditions in~\eqref{eq:dpo2} are the same as those on \eqref{eq:rewriting}: it is enough to take $c$ in \eqref{eq:rewriting} as $j \rightarrow C \leftarrow m$ in~\eqref{eq:dpo2} and recall that composition in $\FTerm{\Sigma}$ is defined by pushout.
\end{proof}

This correspondence can be further extended to arbitrary -- not necessarily  ground -- rewriting systems. Indeed, in PROPs freely generated by an SMT equipped with a chosen Frobenius structure, for any rewriting system there exists an equivalent ground one. The idea is to bend  a rule $\langle l, r \rangle$ of type $n \to m$ into a ground rule of type $0 \to n+m$ by exploiting the compact closed structure introduced in Section \ref{sec:Background}. To make this intuition formal, for any morphism $c\: n \to m$, we define $\rewiring{d} \: 0 \to n+m$ as
\[
\cgr[height=.9cm]{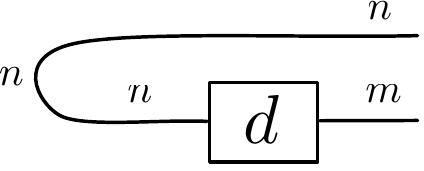}.
\]
Then, for a system $\mathcal{R}$, we define the ground rewrite system $\rewiring{\mathcal{R}}$ as
$$\{ \rrule{\rewiring{l}}{\rewiring{r}} \mid \rrule{l}{r}  \in \mathcal{R} \}$$ which, as stated by the following result, is equivalent to $\mathcal{R}$.
\begin{lem}\label{lem:frobeniusground}
Let $\mathcal{R}$ be a rewriting system on a PROP freely generated by an SMT equipped with a special Frobenius structure. Then,
$d\Rightarrow_{\mathcal{R}} e$ iff $\rewiring{d}\Rightarrow_{\rewiring{\mathcal{R}}} \rewiring{e}$.
\end{lem}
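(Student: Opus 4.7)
I would exploit the self-dual compact closed structure supplied by the chosen special Frobenius monoid, which provides cups $\lccn_n \: n+n \to 0$ and caps $\rccn_n \: 0 \to n+n$ satisfying the snake equations \eqref{eq:snake}. The opening observation is that bending is a bijection: the map $d \mapsto \rewiring{d}$ from $\X(n,m)$ to $\X(0, n+m)$ admits as inverse $f \mapsto (id_n \tns f) \poi (\lccn_n \tns id_m)$, as follows directly from \eqref{eq:snake}. In particular $d = e$ iff $\rewiring{d} = \rewiring{e}$, and the assignment $\rrule{l}{r} \mapsto \rrule{\rewiring{l}}{\rewiring{r}}$ is a bijection between rewriting rules of type $(i,j)$ in $\mathcal{R}$ and ground rules of type $(0, i+j)$ in $\rewiring{\mathcal{R}}$.

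For the ($\Rightarrow$) direction, unpack $d \Ra_\mathcal{R} e$: there are $\rrule{l}{r} \: (i,j) \in \mathcal{R}$ and contexts $c_1 \: n \to k+i$, $c_2 \: k+j \to m$ with $d = c_1 \poi (id_k \tns l) \poi c_2$ and $e = c_1 \poi (id_k \tns r) \poi c_2$. By standard compact closed manipulations---sliding $c_1$ and the $k$ ``passing'' wires past a cap using the naturality identity $\rccn_A \poi (id_A \tns f) = (f^\star \tns id_B) \poi \rccn_B$ for $f \: A \to B$, and rewriting the decomposition of $\rccn_{k+i}$ in terms of $\rccn_k, \rccn_i$ up to symmetry---one establishes, uniformly in a variable $x \: i \to j$, the identity
\begin{equation*}
\rewiring{c_1 \poi (id_k \tns x) \poi c_2} \;=\; \rewiring{x} \poi c
\end{equation*}
for a single context $c \: i+j \to n+m$ built from $c_1, c_2, k$ and cups/caps and \emph{not} depending on $x$. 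Instantiating $x$ at $l$ and $r$ gives $\rewiring{d} = \rewiring{l} \poi c$ and $\rewiring{e} = \rewiring{r} \poi c$, which by Lemma~\ref{lemma:groundrewritingfact} says $\rewiring{d} \Ra_{\rewiring{\mathcal{R}}} \rewiring{e}$.

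For the ($\Leftarrow$) direction, Lemma~\ref{lemma:groundrewritingfact} applied to the ground system $\rewiring{\mathcal{R}}$ produces a rule $\rrule{\rewiring{l}}{\rewiring{r}}$ (induced by some $\rrule{l}{r} \: (i,j) \in \mathcal{R}$) and a context $c \: i+j \to n+m$ such that $\rewiring{d} = \rewiring{l} \poi c$ and $\rewiring{e} = \rewiring{r} \poi c$. Applying the inverse bending and unfolding $\rewiring{l} = \rccn_i \poi (id_i \tns l)$ gives
\begin{equation*}
d \;=\; (id_n \tns \rccn_i) \poi (id_{n+i} \tns l) \poi (id_n \tns c) \poi (\lccn_n \tns id_m),
\end{equation*}
which has precisely the shape $c_1' \poi (id_{k'} \tns l) \poi c_2'$ with $k' = n+i$, $c_1' = id_n \tns \rccn_i$ and $c_2' = (id_n \tns c) \poi (\lccn_n \tns id_m)$. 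Because $c_1', c_2', k'$ do not depend on $l$, the analogous identity holds for $e$ with $r$ in place of $l$, whence $d \Ra_\mathcal{R} e$.

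The main technical obstacle is the forward-direction identity $\rewiring{c_1 \poi (id_k \tns x) \poi c_2} = \rewiring{x} \poi c$ with $c$ uniform in $x$; this is the concrete manifestation of compact closed naturality and is a routine but somewhat intricate diagrammatic calculation combining the snake equations, sliding past caps using $\coc{(-)}$, and the decomposition of $\rccn_{k+i}$. The Frobenius structure is essential: without it neither the cups and caps nor the naturality identities are available. The uniformity of $c$ in $x$ is exactly what lets the two sides of the rule be handled simultaneously, and is why the bijection $\rrule{l}{r} \leftrightarrow \rrule{\rewiring{l}}{\rewiring{r}}$ lifts to an equivalence between rewrite relations.
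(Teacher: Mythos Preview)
Your proposal is correct and follows essentially the same approach as the paper: both directions hinge on the compact closed structure to bend and un-bend, with Lemma~\ref{lemma:groundrewritingfact} handling the ground case. Your ``uniform in $x$'' framing for the forward direction and the explicit formula $d = (id_n \tns \rccn_i) \poi (id_{n+i} \tns l) \poi (id_n \tns c) \poi (\lccn_n \tns id_m)$ for the backward direction are exactly what the paper establishes diagrammatically in~\eqref{eq:rewd}--\eqref{eq:rewe} and~\eqref{eq:undorewd}--\eqref{eq:undorewe}, just expressed symbolically rather than pictorially.
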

\begin{proof} For the left-to-right direction, let $\rrule{l}{r} \: i \to j$ be the applied rule. That means, for some $c_1$, $c_2$ and $k$,
\begin{eqnarray}
  \cgr{circuitd.pdf} & =& \cgr[height=.75cm]{factl.pdf} \nonumber \\
 \cgr{circuite.pdf} & =& \cgr[height=.75cm]{factr.pdf}. \label{eq:facteasr}
\end{eqnarray}
Now, using the compact-closed structure and the laws of SMCs we can transform $\rewiring{d}$ as
\begin{eqnarray}
  \cgr{rewd1.pdf} & =& \cgr{rewd2.pdf} \nonumber\\
  & =& \cgr{rewd3.pdf} \label{eq:rewd}\\
  & =& \cgr{rewd4.pdf}. \nonumber
\end{eqnarray}
There is an analogous computation for $\rewiring{e}$ exploiting~\eqref{eq:facteasr}.
\begin{eqnarray}
  \cgr{rewe1.pdf} & =& \cgr{rewe4.pdf} \label{eq:rewe}
\end{eqnarray}
To conclude, observe that the outcome of \eqref{eq:rewd} rewrites into the outcome of \eqref{eq:rewe} with the rule $\rrule{\rewiring{l}}{\rewiring{r}} \: i \to j$.

For the converse direction, suppose that $\rewiring{d}\Rightarrow_{\rewiring{\mathcal{R}}} \rewiring{e}$ via $\rrule{\rewiring{l}}{\rewiring{r}} \: i \to j$. By Lemma~\ref{lemma:groundrewritingfact} this means that, for some $c$,
\begin{eqnarray}
  \cgr{named.pdf} & =& \cgr{namelcontext.pdf} \label{eq:groundfactdasl}\\
 \cgr{namee.pdf} & =& \cgr{namercontext.pdf}. \label{eq:groundfacteasr}
\end{eqnarray}

Now, by~\eqref{eq:groundfactdasl} we can suitably shape $d$ as follows using the compact-closed and symmetric monoidal structure.
\begin{eqnarray}
  \cgr{circuitd.pdf} & =& \cgr[height=.9cm]{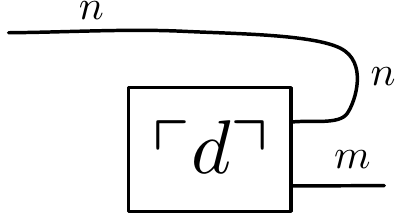} \nonumber\\
  & =& \cgr{undorewd3.pdf} \label{eq:undorewd}\\
  & =& \cgr{undorewd4.pdf} \nonumber
\end{eqnarray}
We act analogously on $e$ exploiting~\eqref{eq:groundfacteasr}.
\begin{eqnarray}
  \cgr{circuite.pdf} & =& \cgr{undorewe4.pdf} \label{eq:undorewe}
\end{eqnarray}
Note that the outcome of \eqref{eq:undorewd} is of shape $d_1 \poi (\id \tns r) \poi d_2$ and the one of \eqref{eq:undorewe} is of shape $d_1 \poi (\id \tns r) \poi d_2$, whence $d\Rightarrow_{\mathcal{R}} e$ by application of rule $\rrule{l}{r}$.
\end{proof}


We now combine the insights provided by Theorem~\ref{thm:gadducciheckel}
and Lemma~\ref{lem:frobeniusground} into
the following theorem, which gives a tight correspondence between rewriting in $\syntax{\Sigma}+\frob$ and DPO rewriting in $\FTerm{\Sigma}$. First, given a rewriting system $\mathcal{R}$ on
$\syntax{\Sigma}+\frob$, we define the system $\Phi(\mathcal{R})$ on $\FTerm{\Sigma}$ as $\{ \rrule{\Phi(l)}{\Phi(r)} \mid \rrule{l}{r}  \in \mathcal{R} \}$ where $\Phi \colon \syntax{\Sigma}+\frob \to \FTerm{\Sigma}$ is the iso of Theorem \ref{thm:coproduct}.

\begin{thm}\label{thm:frobeniusrewriting}
Let $\mathcal{R}$ be any rewriting system on $\syntax{\Sigma}+\frob$. Then,
\[
d \Rightarrow_\mathcal{R} e  \quad \text{ iff } \quad \Phi(\rewiring{d}) \DPOstep{\Phi(\rewiring{\mathcal{R}})}  \Phi(\rewiring{d})\text{ .}
\]
\end{thm}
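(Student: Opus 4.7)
The plan is to chain together the three preceding results (Lemma~\ref{lem:frobeniusground}, the isomorphism $\Phi$ from Theorem~\ref{thm:coproduct}, and Theorem~\ref{thm:gadducciheckel}) into a three-step equivalence. By construction, $\syntax{\Sigma}+\frob$ is a PROP freely generated by an SMT equipped with a chosen special Frobenius structure, so Lemma~\ref{lem:frobeniusground} applies and gives the first equivalence
\[
d \Rightarrow_{\mathcal{R}} e \quad\text{iff}\quad \rewiring{d}\Rightarrow_{\rewiring{\mathcal{R}}}\rewiring{e}.
\]

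Next, since $\Phi \: \syntax{\Sigma}+\frob \to \FTerm{\Sigma}$ is an isomorphism of PROPs (Theorem~\ref{thm:coproduct}), it transports the rewriting system $\rewiring{\mathcal{R}}$ on the left to the rewriting system $\Phi(\rewiring{\mathcal{R}})$ on $\FTerm{\Sigma}$, and it preserves and reflects the rewriting relation. Hence
\[
\rewiring{d}\Rightarrow_{\rewiring{\mathcal{R}}}\rewiring{e}\quad\text{iff}\quad \Phi(\rewiring{d})\Rightarrow_{\Phi(\rewiring{\mathcal{R}})}\Phi(\rewiring{e}).
\]
This step amounts to observing that the defining equations~\eqref{eq:rewriting} for a rewriting step (with $i=0$ by groundness) are equational identities in the source PROP, which are transported verbatim along the iso.

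The final equivalence is provided directly by Theorem~\ref{thm:gadducciheckel}, since $\rewiring{\mathcal{R}}$ is by construction a ground rewriting system (each $\rewiring{l},\rewiring{r}$ has type $0 \to n+m$), and therefore so is $\Phi(\rewiring{\mathcal{R}})$ on $\FTerm{\Sigma}$; under the correspondence between ground PROP systems and DPO systems, this yields
\[
\Phi(\rewiring{d})\Rightarrow_{\Phi(\rewiring{\mathcal{R}})}\Phi(\rewiring{e})\quad\text{iff}\quad \Phi(\rewiring{d})\DPOstep{\Phi(\rewiring{\mathcal{R}})}\Phi(\rewiring{e}).
\]
Composing the three equivalences yields the statement. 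There is no real obstacle here beyond bookkeeping; the only point deserving a sentence of justification is the middle step, namely that a PROP iso preserves and reflects the rewriting relation of Definition~\ref{defn:ground}, which is immediate from the fact that $\Phi$ is identity-on-objects, strict symmetric monoidal, and a bijection on hom-sets, so it commutes with $\poi$, $\tns$, and with the formation of contexts $l \poi c$, $r \poi c$ appearing in Lemma~\ref{lemma:groundrewritingfact}.
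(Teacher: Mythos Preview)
Your proposal is correct and follows exactly the paper's own approach: the paper's proof is a one-liner invoking Lemma~\ref{lem:frobeniusground} together with Theorems~\ref{thm:coproduct} and~\ref{thm:gadducciheckel}, and you have simply spelled out the three-step chain in more detail. Your explicit justification of the middle step (that the PROP isomorphism $\Phi$ transports the ground rewriting relation) is a welcome elaboration of something the paper leaves implicit.
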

\begin{proof}
The proof follows immediately by Lemma~\ref{lem:frobeniusground} and Theorems \ref{thm:coproduct},~\ref{thm:gadducciheckel}.
\end{proof}


A wide corpus of theorems, algorithm and tools has been developed in the last decade for DPO rewriting on \emph{adhesive} categories \cite{Lack2005}. These can be reused to deal for rewriting SMTs equipped with a Frobenius structure because of Theorem~\ref{thm:frobeniusrewriting} and the following.
\begin{pro}
$\Hyp{\Sigma}$ is adhesive.
\end{pro}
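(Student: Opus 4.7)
The plan is to derive adhesivity of $\Hyp{\Sigma}$ from two classical results of Lack and Sobocinski~\cite{Lack2005}: every Grothendieck topos is adhesive, and every slice of an adhesive category is adhesive.

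First I would establish adhesivity of the unsliced category $\Hyp{}$. The category $\Set^{\mathbf{I}}$ is a presheaf topos, hence adhesive. Now $\Hyp{} = \F^{\mathbf{I}}$ sits inside $\Set^{\mathbf{I}}$ as the full subcategory of functors valued in finite sets. Since pullbacks and pushouts in $\Set^{\mathbf{I}}$ are computed pointwise in $\Set$, they preserve the finiteness condition that cuts out $\F^{\mathbf{I}}$; hence $\Hyp{}$ inherits pullbacks and pushouts along monomorphisms from the ambient topos. Moreover, any van Kampen cube built inside $\Hyp{}$ coincides with its interpretation in $\Set^{\mathbf{I}}$, so the van Kampen biconditional transfers verbatim from the ambient topos. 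Having shown $\Hyp{}$ adhesive, I would then invoke the standard result that any slice $\mathcal{C}/A$ of an adhesive category $\mathcal{C}$ is again adhesive; applied to $\Hyp{\Sigma} = \Hyp{}/\Sigma$, this yields the claim.

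The main obstacle is ensuring that the van Kampen property cleanly transfers to a full subcategory closed under pullbacks and pushouts along monos: one must check that any VK cube with vertices in $\Hyp{}$ is identical to its image in $\Set^{\mathbf{I}}$, so that the same (co)limit biconditional is in force. This is routine but slightly delicate. Should it prove fiddly, an alternative path is to note that $\Hyp{\Sigma}$ itself is equivalent, via a category-of-elements construction on $\Sigma$, to the finite-valued full subcategory of a presheaf topos, and then apply the same pointwise-finiteness argument in one step; or to import adhesivity results for typed hypergraph categories already established in the graph transformation literature.
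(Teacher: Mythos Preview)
Your argument is correct and relies on the same closure results of Lack--Soboci\'nski as the paper, but you take an unnecessary detour. Rather than embedding $\Hyp{}=\F^{\mathbf{I}}$ in the presheaf topos $\Set^{\mathbf{I}}$ and then worrying about transferring the van Kampen condition down to the full subcategory of finite-valued functors, the paper simply observes that $\F$ itself is adhesive ``for the same reasons that $\Set$ is'' (the relevant pushouts and pullbacks of finite sets are finite, and the VK biconditional is inherited verbatim from $\Set$), and then applies the closure of adhesive categories under functor categories from~\cite[Prop.~3.5]{Lack2005} directly. This bypasses entirely the ``slightly delicate'' step you flag. Your alternative route via the category of elements is in fact closest to what the paper actually writes: it asserts that $\Hyp{\Sigma}$ is a functor category over $\F$, which one can read either as shorthand for ``functor category then slice'' or as an implicit use of the equivalence $\Hyp{}/\Sigma \simeq \F^{\int\Sigma}$.
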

\begin{proof}
The PROP $\F$ of functions is adhesive for the same reasons that $\Set$ is.
Adhesivity of $\Hyp{\Sigma}$ then follows since it is a functor category over $\F$, see~\cite[Prop. 3.5]{Lack2005}.
\end{proof}

The theory of adhesive categories says that pushout complements for a DPO rewriting as in~\eqref{eq:dpo2} are uniquely defined when the arrow $j \to L$ is mono. However, this situation is not enforced by our approach, as shown by the following example.

\begin{exm}\label{ex:unsoundcontext}
Consider $\Sigma = \{ e_1 \: 0 \to 1, e_2 \: 1\to 0, e_3 \: 1 \to 1\}$ and a rewriting system $\mathcal{R}$ with the rule $\rrule{\cgr{id-noframe.pdf}}{\cgr{circuit-e3.pdf}} \: (1,1).$ It gets interpreted in $\FTerm{\Sigma}$ as the DPO rule
$$\cgr{multiplecontexts-dporule.pdf}.$$
The left leg of the span is not mono, and therefore pushout complements are not necessarily unique for the application of this rule, as testified by the following two DPO rewriting steps.
\begin{equation}\label{eq:multiplecontext-goodoutcome}
\vcenter{
\xymatrix@=15pt{
{\cgr[width=7pt]{node.pdf}} \ar[d] & \ar@{}[dl]|(.75){\text{\large $\urcorner$}} {\cgr{wrongcontext-twodots.pdf}} \ar[d]_{g} \ar[l] \ar[r] \ar@{}[dr]|(.75){\text{\large $\ulcorner$}} & {\cgr[height=13pt]{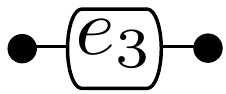}} \ar[d] \\
{\cgr[height=13pt]{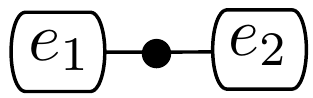}} & \ar[l] {\cgr[height=13pt]{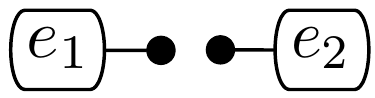}} \ar[r] & {\cgr[height=13pt]{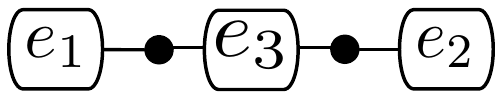}}
}
}
\end{equation}
\begin{equation}\label{eq:multiplecontext-badoutcome}
\vcenter{
\xymatrix@=15pt{
{\cgr[width=7pt]{node.pdf}} \ar[d]   & \ar@{}[dl]|(.75){\text{\large $\urcorner$}} {\cgr[height=13pt]{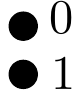}} \ar[d]_{g'} \ar[l] \ar[r] \ar@{}[dr]|(.75){\text{\large $\ulcorner$}} & {\cgr[height=13pt]{wrongcontext-R.pdf}} \ar[d] \\
{\cgr[height=13pt]{wrongcontext-G.pdf}} & \ar[l] {\cgr[height=13pt]{wrongcontext-PC.pdf}} \ar[r] & {\cgr[height=27pt]{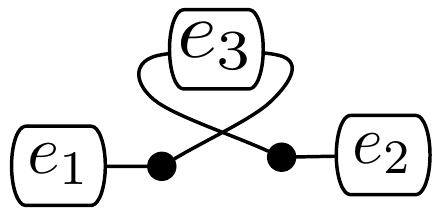}}
}
}
\end{equation}
The different outcome is due to the fact $g$ maps $0$ to the leftmost and $1$ to the rightmost node, whereas $g'$ swaps the assignments. Note that, as guaranteed by Theorem~\ref{thm:frobeniusrewriting}, both rewriting steps can be mimicked at the syntactic level in $\syntax{\Sigma} + \frob$. In the second case, one needs to use the compact closed structure.
\begin{eqnarray*}
&\cgr[height=13pt]{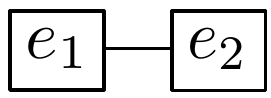} \qquad \qquad \Rightarrow_\mathcal{R}& \cgr[height=13pt]{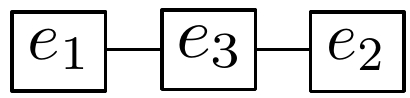} \\ \\
&\cgr[height=13pt]{multiplecontext1.pdf}= \cgr[height=22pt]{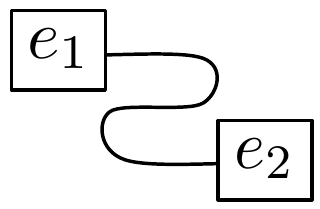}= \cgr[height=20pt]{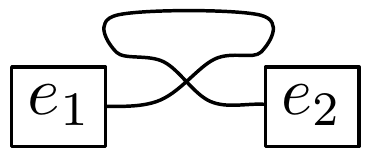} \ \ \ \Rightarrow_\mathcal{R}& \cgr[height=25pt]{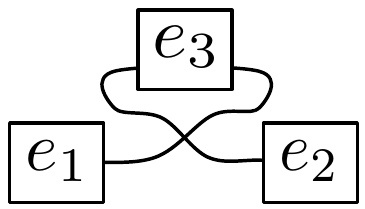}
\end{eqnarray*}
 \end{exm}

 \begin{exm}\label{ex:ncbialgebrasrewriting}
In a similar fashion we would like to study the rewriting theory of the SMTs introduced in Example \ref{exm:props}, in particular the case of non-commutative bialgebras (Ex.\ref{exm:props}.\ref{ex:ncbialgebras}). First, we orient left-to-right the equations~\eqref{eq:ncbialgebralaws}, creating a rewriting system $\mathcal{R}_{\scriptscriptstyle \NCB}$. We then obtain the ground rewriting system $\rewiring{\mathcal{R}_{\scriptscriptstyle \NCB}}$ by bending each rule in $\mathcal{R}_{\scriptscriptstyle \NCB}$ and, finally, via the translation $\Phi \: \syntax{\Sigma}+\frob \tr{\cong} \FTerm{\Sigma}$ we obtain the DPO system  $\Phi(\rewiring{\mathcal{R}_{\scriptscriptstyle \NCB}})$ --- see Figure \ref{fig:hopf}. In Section \ref{sec:terminationHopf}, we will show termination of $\Phi(\rewiring{\mathcal{R}_{\scriptscriptstyle \NCB}})$. However, for this to work we first need a rewriting procedure that is sound and complete for rewriting in SMTs, like $\NCB$, that miss the extra Frobenius structure required in Theorem~\ref{thm:frobeniusrewriting}. This extension of our approach is the content of the next section.
\end{exm}

\section{The general symmetric monoidal case}\label{sec:SymMonDpoRewriting}


In the general case, where there is no chosen special Frobenius structure, we can still use $\FTerm{\Sigma}$ for rewriting, but we need to careful about how we find \emph{legal}
contexts. For instance, the context in~\eqref{eq:multiplecontext-badoutcome} relies on the extra Frobenius structure being present.  

%
%


The remit of this section is to tailor a suitable restriction of DPO rewriting that adheres to rewriting for an SMT, just as generalised DPO rewriting corresponds to rewriting for an SMT with a chosen Frobenius structure (Theorem~\ref{thm:frobeniusrewriting}).

Before the formal developments, let us sketch the intuition. Given a rewriting system $\mathcal{R}$, we are interested in the shape of contexts for $\rewiring{\mathcal{R}}$-rewriting in $\syntax{\Sigma}+\frob \cong \FTerm{\Sigma}$ that ``look like'' legal contexts for $\mathcal{R}$-rewriting in $\syntax{\Sigma}$. By Lemma~\ref{lem:frobeniusground}, a generic context in $\syntax{\Sigma}+\frob$ for a rule $\rrule{\rewiring{l}}{\rewiring{r}}$ has shape
\[ \cgr{LcontextC.pdf} \]
Now, this context corresponds to a legal one in $\syntax{\Sigma}$ only when we can find $c_1$, $c_2$ in $\syntax{\Sigma}$ such that
\begin{equation}\label{eq:goodcontext} \cgr{contextC.pdf} = \cgr{SplittedContext.pdf}.\end{equation}
Indeed, in that case, we are in presence of the bent version of a legal context in $\syntax{\Sigma}$ for rule $\rrule{l}{r}$ (cf. \eqref{eq:rewd}):
\[ \cgr{rewd4.pdf} \ =\  \cgr{rewd2.pdf}\]


Our task is to characterise legal contexts like~\eqref{eq:goodcontext}. We proceed in two steps. First, we introduce the notion of \emph{boundary complement} (Definition~\ref{def:boundarycomplement}), which rests on monogamicity (Definition~\ref{def:monogamous}). It clarifies the role that interfaces $i,j,m,n$ play in the legal context~\eqref{eq:goodcontext}: $j$, $n$ serve as inputs and $i$, $m$ serve as outputs of its un-bent version. Contrary to arbitrary contexts in $\FTerm{\Sigma}$ (\emph{cf.} Example~\ref{ex:unsoundcontext}), boundary complements are uniquely defined up-to iso when they exist (Proposition~\ref{thm:uniquenessBoundaryCompl}).
The second notion is \emph{convex} matching (Definition~\ref{def:convexmatching}),
which enforces that the only communication between $c_1$ and $c_2$ happens along the interface $k$, \emph{cf}~\eqref{eq:goodcontext}.

A convex matching always allows to pick a (unique) boundary complement as context: we call this situation a \emph{convex} a DPO rewriting step (Definition~\ref{def:rigidDpoRewriting}). The main result of the section is the adequacy of convex DPO rewriting for rewriting in an SMT (Theorem~\ref{th:adequacyRigidSMT}).

\begin{defn}[Boundary complement] \label{def:boundarycomplement}
    For monogamous cospans $i \xrightarrow{a_1} L \xleftarrow{a_2} j$ and $n \xrightarrow{b_1} G \xleftarrow{b_2} m$,
    and a mono $f : L \to G$, a pushout complement as $(\dagger)$ below
    \[ \xymatrix@C=50pt@R=20pt{
        L \ar[d]_f \ar@{}[dr]|{(\dagger)} & {i + j} \ar[l]_{a := [a_1,a_2]}
              \ar[d]^{c := [c_1,c_2]}
        \ar@{}[dl]|(.8){\text{\large $\urcorner$}}
         \\
       G & L^\perp \ar[l]^{g} \\
       & n+m \ar[ul]^{[b_1,b_2]} \ar@{-->}[u]_{[d_1,d_2]}
       } \]
    is called a \textit{boundary complement} if $c$ is mono and there exists $d_1\: n\to L^\perp$ and $d_2\: m \to L^\perp$, making the above triangle commute, such that
    \begin{equation}\label{eq:boundary} j+n \xrightarrow{[c_2,d_1]} L^\perp \xleftarrow{[c_1,d_2]} m+i \end{equation}
    is a monogamous cospan.
\end{defn}

%

\begin{pro}\label{thm:uniquenessBoundaryCompl}
    Boundary complements in $\Hyp{\Sigma}$ are unique, when they exist.
\end{pro}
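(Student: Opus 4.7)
The plan is to combine the adhesivity of $\Hyp{\Sigma}$ with the monogamy constraints from Definition~\ref{def:boundarycomplement}: the mono hypotheses there will force the underlying pushout complement object to be determined up to a unique isomorphism, and the monogamy of the resulting cospan will pin down the interface maps $d_1$ and $d_2$ on top of that.

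Fix two candidate boundary complements $(L^\perp, c, g, d_1, d_2)$ and $(L^{\perp\prime}, c', g', d_1', d_2')$ sharing the same $(L, G, f, a)$. The first step is to show that the pushout complement \emph{objects} are canonically isomorphic over $G$ and under $i+j$. Since $\Hyp{\Sigma}$ is a slice of the presheaf category $\F^{\mathbf{I}}$, colimits --- and, by a standard argument, also pushout complements --- are computed pointwise, so one can work in $\F$ separately on the node and hyperedge components. On nodes, with $f$ mono (from the DPO setup) and $c, c'$ mono (from Definition~\ref{def:boundarycomplement}), a direct pointwise calculation forces the pushout complement to be, up to unique iso, $(i+j) \sqcup (G \setminus f(L))$ with $c$ the coproduct injection and $g$ the obvious map. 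On hyperedges, where $i+j$ contributes nothing, the pushout complement is simply the complement of the hyperedges of $f(L)$ inside those of $G$. Combining these, one obtains a canonical iso $\phi \: L^\perp \to L^{\perp\prime}$ such that $\phi \poi c = c'$ and $g' \poi \phi = g$.

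The second step is to verify that $\phi$ also carries $d_1$ to $d_1'$ and $d_2$ to $d_2'$. Here I use the monogamy requirement on~\eqref{eq:boundary}: the image of $[c_2, d_1]$ is exactly the set of indegree-$0$ nodes of $L^\perp$ and, dually, the image of $[c_1, d_2]$ is the set of outdegree-$0$ nodes. Combined with the triangle equation $g \poi [d_1, d_2] = [b_1, b_2]$, this forces $d_1(x)$, for each $x \in n$, to be the unique indegree-$0$ preimage of $b_1(x)$ under $g$ that lies outside the image of $c_2$, and symmetrically for $d_2$. Since $\phi$ preserves hyperedges, hence in/out degrees, and commutes with $g, g'$ and the coproduct injections $c_1, c_2$, it must carry these unique witnesses in $L^\perp$ to the analogous ones in $L^{\perp\prime}$.

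The delicate step is the combinatorial verification that the monogamy condition actually pins down the lifts $d_1, d_2$ uniquely. The subtlety is already visible when $a$ fails to be mono --- precisely the case when some node of $L$ lies in both $\inp{L}$ and $\out{L}$ --- because such a node admits two preimages in $L^\perp$ under $g$, both of indegree $0$. It is only the extra constraint $d_1(x) \notin c_2(j)$, enforced by $[c_2, d_1]$ being mono, that discriminates between them. Checking that this discrimination always leaves exactly one valid choice is where I expect the main work to lie; it ultimately reduces to a cardinality bookkeeping relating the monogamy profiles of $L$, $G$ and $L^\perp$ through the pushout identifications, with no deeper conceptual obstacle beyond that.
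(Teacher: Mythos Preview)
Your overall plan is sound, but step 1 skips what is in fact the core of the argument. It is true that the node and hyperedge \emph{sets} of a pushout complement are determined pointwise in $\F$ once $f$ and $c$ are mono, exactly as you say. What does \emph{not} follow automatically is that the source and target maps $E_{L^\perp} \to N_{L^\perp}$ are determined --- pushout complements in a presheaf category are not ``computed pointwise'' in that strong a sense, because the pointwise complements on sets still have to be equipped with compatible structure maps, and there may be more than one way to do this. Concretely, take a hyperedge $h \in E_{L^\perp} = E_G \setminus f(E_L)$ whose (say $p$-th) source in $G$ is a node $v$ with $v = f(a_1(x)) = f(a_2(y))$ for some $x \in i$ and $y \in j$; then both $c(x)$ and $c(y)$ sit over $v$ via $g$, and \emph{either} choice for the $p$-th source of $h$ in $L^\perp$ makes $g$ a hypergraph homomorphism and the square a pushout. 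The paper resolves this ambiguity precisely via the monogamy of~\eqref{eq:boundary}: since $c_1(i)$ lands among the outputs of that cospan, no node in its image can be the source of a hyperedge, so the source of $h$ is forced to be $c(y)$; dually for targets.

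In other words, the monogamicity argument you deploy in step 2 to pin down the lifts $d_1, d_2$ is the right tool, but it must be applied one level earlier, to fix the hypergraph structure of $L^\perp$ itself. Only after that is your $\phi$ a genuine hypergraph isomorphism rather than a bare pair of bijections on nodes and edges, and only then does ``$\phi$ preserves in/out degrees'' make sense. Your discussion of the uniqueness of $d_1, d_2$ is a worthwhile addendum --- the paper's own proof is in fact silent on that point --- but it is not where the main obstruction lies.
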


As mentioned, the notion of boundary complement does not tell the whole story about which matches in $\FTerm{\Sigma}$ are legal in $\syntax{\Sigma}$. The missing requirement is that $c_1^{\star}$ and $c_2$ in~\eqref{eq:goodcontext} can only communicate through interface $k$, which connects outputs of $c_1$ to inputs of $c_2$. The following counterexample shows that, even in presence of a boundary complement, linking outputs of $c_2$ to inputs of $c_1$ may still yield an illegal rewriting step in $\syntax{\Sigma}$.

\begin{exm}\label{ex:unsound} Fix $\Sigma = \{ e_1 \: 2 \to 1, {e_2 \: 1 \to 2}, {e_3 \: 1 \to 1} , \\ e_4 \: 1 \to 1\}$ and consider the following rewriting rule in $\syntax{\Sigma}$.
\begin{eqnarray}
         \rrule{\lower9pt\hbox{$\includegraphics[height=.6cm]{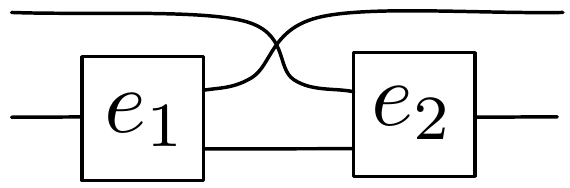}$}\ \ }{\lower9pt\hbox{\ \ $\includegraphics[height=.7cm]{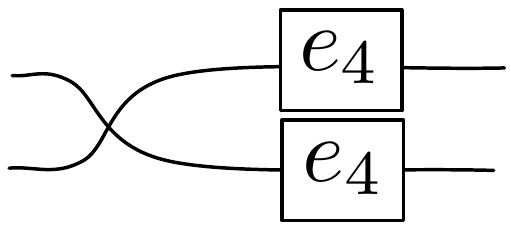}$}} \: (2,2) \label{eq:counterexSoundness1} \end{eqnarray}
         Left and right side are interpreted in $\FTerm{\Sigma}$ as cospans
        \begin{eqnarray*}
         2 \tr{} \cgr[height=.6cm]{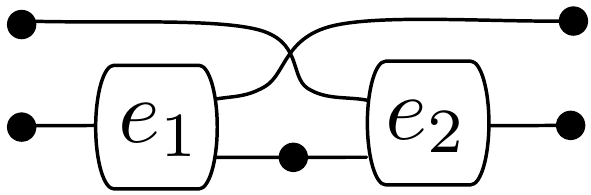} \tl{} 2 & \!\!\!\!\! & 2 \tr{}  \cgr[height=.7cm]{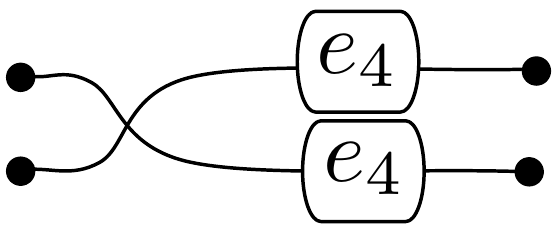} \tl{} 2.
         \end{eqnarray*}
We introduce another diagram $c \: 1\to 1$ in $\syntax{\Sigma}$ and its interpretation in $\FTerm{\Sigma}$:
                        \begin{equation*}\label{eq:counterexSoundness3}
                         \xymatrix@=10pt{
                        \lower7pt\hbox{$\includegraphics[height=.6cm]{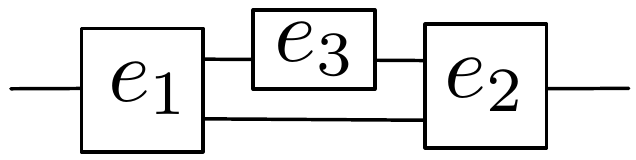}$}\quad \ar@{|->}[rr]^-{\SynToCsp{\cdot}} && \ \
                        1 \tr{} \lower7pt\hbox{$\includegraphics[height=.6cm]{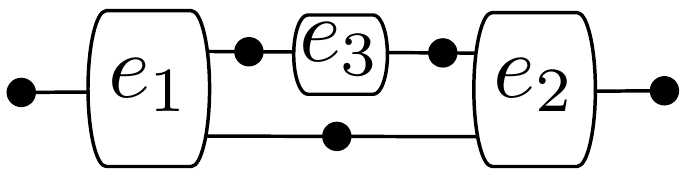}$} \tl{} 1.
                        }
          \end{equation*}
         Now, the left-hand side of the rule \eqref{eq:counterexSoundness1} cannot match in $c$. However, their interpretation yields a DPO rewriting step in $\FTerm{\Sigma}$ as below, where $f$ maps grey nodes to grey nodes.
         \begin{eqnarray*}
         \xymatrix@=15pt{
         {\cgr[height=.8cm]{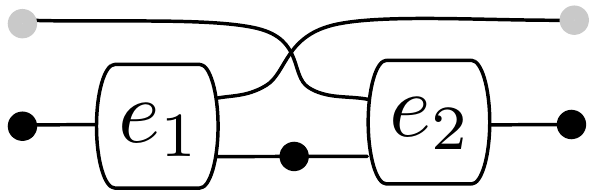} } \ar[d]_{f} & \ar[l] 2+2 \ar[d] \ar[r] & {\cgr[height=.8cm]{unsoundHypRroundSwap.pdf}} \ar[d]
          \\
         {\cgr[height=.95cm]{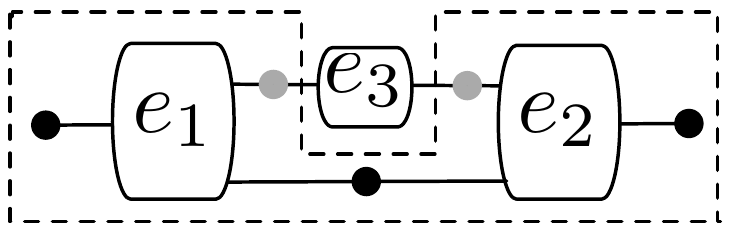}}  & \ar[l] {\cgr[height=.7cm]{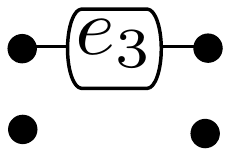}} \ar[r] & {\cgr[height=.45cm]{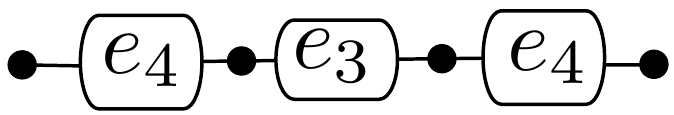}}
         }
         \end{eqnarray*}
         Observe that the leftmost pushout above \emph{is} a boundary complement: the input-ouput partition is correct. Still, mimicking this rewriting step in $\syntax{\Sigma}$ fails because the context does not fit the legal shape~\eqref{eq:goodcontext}. This is best evident by showing how matching in $\SynToCsp{c}$ appears under the isomorphism $\FTerm{\Sigma} \cong \syntax{\Sigma} + \frob$:
          \begin{eqnarray*}
          {\cgr[height=1.5cm]{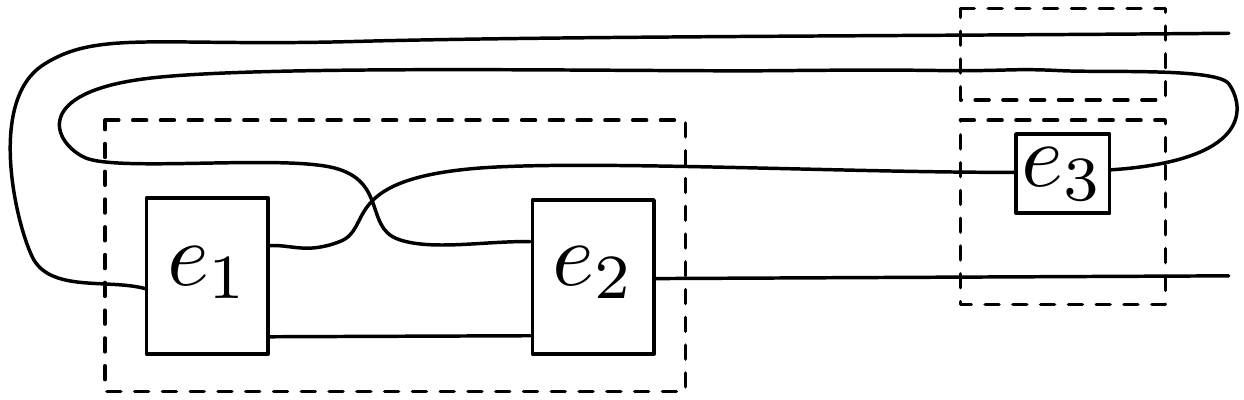}} \dfop {\cgr[height=1.2cm]{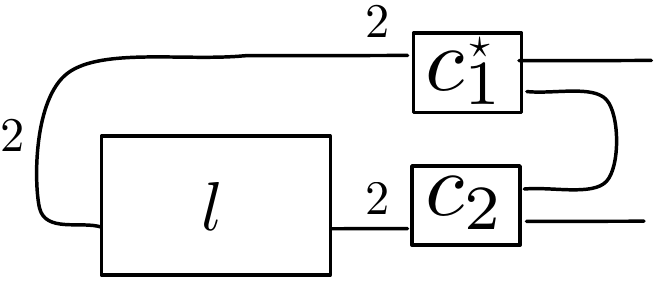}}
          \end{eqnarray*}
          The diagrams $l$, $c_1$ and $c_2$ on the right side are defined in terms of the dotted squares on the right. The difference with the prescription of~\eqref{eq:goodcontext} is apparent: the topmost output interface of $c_2$ connects to the bottommost input interface of $c_1$. For this reason, we cannot un-bend our context to obtain a legal matching in $\syntax{\Sigma}$.
\end{exm}

In order to rule out situations~\eqref{eq:counterexSoundness1} and complete the characterisation of legal contexts like~\eqref{eq:goodcontext}, we recover the notion of convexity (Def.~\ref{def:convexsubhyp}) to introduce \emph{convex} matchings. The intuition is that a convex matching cannot leave ``holes'' as $f$ does in Example~\ref{ex:unsound}.

\begin{defn}[Convex matching] \label{def:convexmatching}
We call $m : L \to G$ in $\Hyp{\Sigma}$ a \textit{convex matching} if it is mono and its image is convex.
\end{defn}

The importance of the above notion is best revealed by Lemma~\ref{lemma:convexfact} and Theorem~\ref{thm:charactImage}: given a convex matching, it is always possible to recover the appropriate context in $\syntax{\Sigma}$. We now combine the notions of boundary complement and of convex matching to tailor a family of DPO rewriting steps which only yield legal $\syntax{\Sigma}$-rewriting.

\begin{defn}[Convex DPO rewriting step] \label{def:rigidDpoRewriting} Let $\mathcal{R}$ be a DPO rewriting system. A \emph{convex} DPO rewriting step
\begin{eqnarray*}
\vcenter{\xymatrix@R=-2pt@C=22pt{ & D& \\ 0 \ar[ur] && \ar[ul]_{[q_1,q_2]} n+m}} & \rigidDPOstep{\mathcal{R}} & \vcenter{\xymatrix@R=-2pt@C=22pt{ & E & \\ 0 \ar[ur] && \ar[ul]_{[p_1,p_2]} n+m}}
\end{eqnarray*}
happens when there is a rule $L \xleftarrow{[a_1,a_2]} i+j \xrightarrow{[b_1,b_2]} R$ in $\mathcal{R}$ such that the following diagram in $\Hyp{\Sigma}$ commutes, the two squares are pushouts
\begin{equation}\label{eq:dpo3}
\raise25pt\hbox{$
\xymatrix@R=15pt@C=20pt{
L \ar[d]_{f}   &  i+j \ar[d]
 \ar@{}[dl]|(.8){\text{\large $\urcorner$}}
 \ar@{}[dr]|(.8){\text{\large $\ulcorner$}}
 \ar[l]_{[a_1,a_2]} \ar[r]^{[b_1,b_2]}  & R \ar[d] \\
 D &  C \ar[l] \ar[r]  & E \\
&  n+m \ar[u] \ar[ur]_{[p_1,p_2]}  \ar[ul]^{[q_1,q_2]}
}$}
\end{equation}
 and the following two restrictions hold:
 \begin{itemize}
 \item $f \: L \to G$ is a convex matching;
 \item in the leftmost pushout $i+j \to C$ is a boundary complement.
\end{itemize}
\end{defn}
Note that the relation $\rigidDPOstep{\mathcal{R}}$ is contained in $\DPOstep{\mathcal{R}}$ (Definition~\ref{defn:dpo}), the difference being that the leftmost pushout has to rest on a convex matching and a boundary complement.

We have now all the ingredients to prove the adequacy of convex DPO rewriting with respect to rewriting in $\syntax{\Sigma}$.

\begin{thm}\label{th:adequacyRigidSMT}Let $\mathcal{R}$ by any rewriting system  on $\syntax{\Sigma}$. Then, 
\begin{eqnarray*} d\Rightarrow_{\mathcal{R}}e & \Leftrightarrow & \Phi(\rewiring{d}) \rigidDPOstep{\Phi(\rewiring{\mathcal{R}})} \Phi(\rewiring{e}).\end{eqnarray*}
\end{thm}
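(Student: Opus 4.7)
The plan is to prove the two directions of the biconditional separately, combining the results of Section~\ref{sec:dpoRewriting} (which relates rewriting in $\syntax{\Sigma}+\frob$ to unrestricted DPO via Theorem~\ref{thm:frobeniusrewriting}) with the combinatorial machinery of Section~\ref{sec:HypInterpretation}: the characterisation of the image of $\SynToCsp{\cdot}$ as monogamous directed acyclic cospans (Theorem~\ref{thm:charactImage}), and the factorisation of such cospans along a convex sub-hypergraph (Lemma~\ref{lemma:convexfact}). The informal intuition is that the convex matching and boundary-complement restrictions precisely carve out, from the wider Frobenius-enhanced DPO steps, those that correspond to legal $\syntax{\Sigma}$-contexts of the shape~\eqref{eq:goodcontext}.

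For the forward direction, assume $d\Rightarrow_\mathcal{R} e$ in $\syntax{\Sigma}$, witnessed by some $\rrule{l}{r}\in\mathcal{R}$ and contexts $c_1, c_2 \in \syntax{\Sigma}$. Applying Lemma~\ref{lem:frobeniusground} and Theorem~\ref{thm:gadducciheckel} yields an unrestricted DPO step $\Phi(\rewiring{d}) \DPOstep{\Phi(\rewiring{\mathcal{R}})} \Phi(\rewiring{e})$; what must be verified is that this step is \emph{convex}. Because $c_1, c_2$ live in $\syntax{\Sigma}$ (and not the larger $\syntax{\Sigma}+\frob$), their translations under $\Phi\circ\SynToCsp{\cdot}$ are monogamous directed acyclic cospans by Theorem~\ref{thm:charactImage}, so $\Phi(l)$ sits as a convex sub-hypergraph of $\Phi(\rewiring{d})$, furnishing a convex matching. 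The boundary-complement condition follows by inspection of the bent cospans produced by the calculation in~\eqref{eq:rewd}: the candidate interface~\eqref{eq:boundary} is the concatenation of the input/output wires of $c_1$ and $c_2$, which is monogamous precisely because $c_1, c_2$ are.

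The main obstacle is the converse direction. Starting from a convex DPO step via some rule $\Phi(\rewiring{\rrule{l}{r}})$, a convex matching $f$, and a boundary complement $C$, the task is to exhibit contexts $c_1, c_2 \in \syntax{\Sigma}$ reconstructing an $\mathcal{R}$-rewrite $d\Rightarrow_\mathcal{R} e$. Since $d\in\syntax{\Sigma}$, its bent image $\Phi(\rewiring{d})$ is monogamous directed acyclic, and the convex matching assumption makes the image of $\Phi(l)$ into a convex sub-hypergraph. Lemma~\ref{lemma:convexfact} then factorises $\Phi(\rewiring{d})$ into three monogamous directed acyclic cospans, the two outer ones being the candidates for (the bent images of) $c_1$ and $c_2$. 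The crucial role of the boundary-complement condition is to guarantee that the interface of $C$ partitions in such a way that these outer cospans carry the \emph{correct} input/output wires, ruling out pathologies like Example~\ref{ex:unsound} in which apparent factorisations mix left outputs with right inputs. A final appeal to Theorem~\ref{thm:charactImage} places the outer cospans in the image of $\SynToCsp{\cdot}$, from which $c_1, c_2 \in \syntax{\Sigma}$ are extracted; inverting the bending construction of Lemma~\ref{lem:frobeniusground} then produces the desired $\syntax{\Sigma}$-step $d\Rightarrow_\mathcal{R} e$.
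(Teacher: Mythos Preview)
Your overall strategy---use Lemma~\ref{lemma:convexfact} and Theorem~\ref{thm:charactImage} to pass between convex DPO data and $\syntax{\Sigma}$-contexts---is the same as the paper's, and your forward direction is essentially correct. The converse direction, however, contains two genuine gaps.

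First, a technical slip: the bent cospan $\Phi(\rewiring{d})=0\to D\leftarrow n+m$ is \emph{not} monogamous. The $n$ nodes that were inputs of $d$ now lie in the right interface yet still have in-degree~$0$, violating Definition~\ref{def:monogamous}, so Lemma~\ref{lemma:convexfact} cannot be applied to this cospan. The paper instead observes that the carrier hypergraph $D$ is unchanged by bending and applies the lemma to the un-bent cospan $\SynToCsp{d}=n\tr{q_1}D\tl{q_2}m$, which \emph{is} monogamous directed acyclic by Theorem~\ref{thm:charactImage}.

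Second, and more substantively, you misplace the role of the boundary-complement hypothesis. The factorisation and the extraction of $c_1,c_2\in\syntax{\Sigma}$ require only convexity of the matching (Lemma~\ref{lemma:convexfact} plus Theorem~\ref{thm:charactImage}); Example~\ref{ex:unsound} is ruled out because convexity fails there, not by any boundary-complement consideration---indeed a boundary complement \emph{does} exist in that example. What you then obtain, after invoking faithfulness of $\SynToCsp{\cdot}$ (Proposition~\ref{prop:faithful}, which you omit), is $d=c_1\poi(id_k\tns l)\poi c_2$, and hence a rewrite $d\Rightarrow_{\mathcal{R}}e'$ with $e':=c_1\poi(id_k\tns r)\poi c_2$. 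Nothing so far connects $e'$ to the \emph{given} $e$. This is precisely where the paper uses the boundary-complement assumption: the pushout complement arising from the factorisation is a boundary complement, as is the one supplied by the convex DPO step, and by \emph{uniqueness} of boundary complements (Proposition~\ref{thm:uniquenessBoundaryCompl}) they coincide, forcing the right-hand pushouts---and hence $e'$ and $e$---to agree. Your final clause about ``inverting the bending construction of Lemma~\ref{lem:frobeniusground}'' elides this step entirely.
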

\begin{proof} For the only if direction, by Theorem~\ref{thm:frobeniusrewriting} $d\Rightarrow_{\mathcal{R}}e$ implies $\Phi(\rewiring{d}) \DPOstep{\Phi(\rewiring{\mathcal{R}})} \Phi(\rewiring{e})$. It is not hard to verify that the argument actually constructs a \emph{convex} DPO rewriting, thus yielding the desired statement. The reader may consult Appendix~\ref{app:proofs} for details.

We now turn to the converse direction. Let 
\begin{eqnarray*}
\Phi(\rewiring{d}) \dfop \vcenter{\xymatrix@R=5pt@C=3pt{ & D& \\ 0 \ar[ur] && \ar[ul]_{[q_1,q_2]} n+m}} & & \Phi(\rewiring{e}) \dfop \vcenter{\xymatrix@R=5pt@C=3pt{ & E & \\ 0 \ar[ur] && \ar[ul]_{[p_1,p_2]} n+m}}.
\end{eqnarray*}
Our assumption gives us a diagram as in~\eqref{eq:dpo3}, with application of a rule $\rrule{\Phi(\rewiring{l})}{\Phi(\rewiring{r})}$ in $\Phi(\rewiring{\mathcal{R}})$. We now want to show that $d\Rightarrow_{\mathcal{R}}e $ with rule $\rrule{l}{r}$, say of type $(i,j)$. Now, because $n \tr{q_1} D \tl{q_2} m = \SynToCsp{d}$, it is monogamous directed acyclic by Theorem~\ref{thm:charactImage}. Since the matching $f \: L \to D$  in~\eqref{eq:dpo3} is convex, Lemma~\ref{lemma:convexfact} yields a decomposition of $n \tr{q_1} D \tl{q_2} m$ in terms of monogamous directed acyclic cospans:
\begin{equation*}
\left( n \tr{} C_1 \tl{} i\!+\!k \right) \poi\!\!\!
\begin{array}{cc}
\left( k \tr{id} k \tl{id} k \right)\\
\tns \\
\left( i \tr{} L \tl{} j \right)
\end{array}
\!\!\!\poi
\left( j\!+\!k\tr{} C_2 \tl{} m \right).
\end{equation*}
Applying again Theorem~\ref{thm:charactImage} we obtain $c_1$, $c_2$ in $\syntax{\Sigma}$ such that
\begin{align*}
\SynToCsp{c_1} = n\tr{} C_1 \tl{} i+k \qquad \SynToCsp{c_2} = j+k\tr{} C_2 \tl{} m.
\end{align*}
By functoriality of $\SynToCsp{\cdot}$, $\SynToCsp{d} = \SynToCsp{c_1 \poi (id \tns l) \poi c_2} $ and, since $\SynToCsp{\cdot}$ is a faithful prop morphism, $d = c_1 \poi (id \tns l) \poi c_2$. Thus we can apply the rule $\rrule{l}{r}$ on $e$, which yields $e = c_1 \poi (id \tns r) \poi c_2$ such that $d \Ra_{\mathcal{R}} e$. We can conclude that $\SynToCsp{e} = n\tr{p_1} E \tl{p_2} m$ because boundary complements are unique (Proposition~\ref{thm:uniquenessBoundaryCompl}).
\end{proof}

\section{Case study: non-commutative bialgebras}\label{sec:terminationHopf}

\begin{figure*}\caption{DPO Rewriting system $\Phi(\rewiring{\mathcal{R}_{\scriptscriptstyle \NCB}})$ --- \emph{cf.} Example~\ref{ex:ncbialgebrasrewriting}.} \label{fig:hopf}
  \[
  \begin{array}{rrcl} 
      \BA{1} := & \!\!\!\!\cgr{associativityLdpo.pdf}\!\!\!\! & \leftarrow 4 \rightarrow & \!\!\!\!\cgr{associativityR.pdf}
      \\
      \BA{2} := & \!\!\!\!\cgr{coassociativityL.pdf}\!\!\!\! & \leftarrow 4 \rightarrow & \!\!\!\!\cgr{coassociativityR.pdf}\!\!\!\!
      \\
      \BA{3} := & \!\!\!\!\cgr{unitL.pdf}\!\!\!\! & \leftarrow 2 \rightarrow & \!\!\!\!\cgr{unitR.pdf}\!\!\!\!
      \\
      \BA{4} := & \!\!\!\!\cgr{counitL.pdf}\!\!\!\! & \leftarrow 2 \rightarrow & \!\!\!\!\cgr{counitR.pdf}\!\!\!\!
      \\
      \BA{5} := & \!\!\!\!\cgr{RunitL.pdf}\!\!\!\! & \leftarrow 2 \rightarrow & \!\!\!\!\cgr{unitR.pdf}\!\!\!\!
      \\
      \BA{6} := &  \!\!\!\!\cgr{RcounitL.pdf}\!\!\!\! & \leftarrow 2 \rightarrow & \!\!\!\!\cgr{counitR.pdf}\!\!\!\!
      \\
      \BA{7} := & \!\!\!\!\cgr{duplicationunitL.pdf}\!\!\!\! & \leftarrow 2 \rightarrow & \!\!\!\!\cgr{duplicationunitR.pdf}\!\!\!\!
      \\
      \BA{8} := & \!\!\!\!\cgr{duplicationcounitL.pdf}\!\!\!\! & \leftarrow 2 \rightarrow & \!\!\!\!\cgr{duplicationcounitR.pdf}\!\!\!\!
      \\
      \BA{9} := & \!\!\!\!\cgr{bialgL.pdf}\!\!\!\! & \leftarrow 4 \rightarrow & \!\!\!\!\cgr{bialgR.pdf}\!\!\!\!
      \\
      \BA{10} := & \!\!\!\!\cgr{boneL.pdf}\!\!\!\! & \leftarrow 0 \rightarrow &  {  }
    \end{array}
  \]
\end{figure*}

Thanks to Theorem~\ref{th:adequacyRigidSMT}, we are now in position to recover Example~\ref{ex:ncbialgebrasrewriting} and study the DPO rewriting theory of non-commutative bialgebras (Example~\ref{exm:props}.\ref{ex:ncbialgebras}), shown in Figure~\ref{fig:hopf}. As a proof of concept, we focus on proving \emph{termination} for this system. First, we construct a metric based on two kinds of paths and a weight.
\begin{itemize}
  \item A \textit{U-path} is a path $p$ from an input or an $\eta$-hyperedge to an output or an $\epsilon$-hyperedge.
  \item An \textit{M-path} is a path from a $\mu$-hyperedge to a $\nu$-hyperedge.
\end{itemize}

A $\mu$-tree with root $x$ is a maximal tree of $\mu$-hyperedges with output $x$. Similarly, a $\nu$-tree with root $x$ is a maximal tree of $\nu$-hyperedges with input $x$.

For a $\mu$-hyperedge $h$, let the L-weight $\ell(h)$ be the size of the $\mu$-tree whose root is the first input of $\mu$. Similarly, for a $\nu$-hyperedge, let $\ell(h)$ be the size of the $\nu$-tree whose root is the first output of $h$. Let $\ell(h) = 0$ otherwise and:
\[
G \precL H \iff \sum_{h \in E_G} \ell(h) \leq \sum_{h \in E_H} \ell(h).
\]
Next, define orders \precU, \precM, \precmu, \precnu based on counting the number of U-paths, M-paths, $\mu$-hyperedges, and $\nu$-hyperedges, respectively. Armed with the five orders, we define the following lexicographic ordering that combines them as its components:
\begin{equation}\label{eq:lex-metric}
  \preceq \ :=\  \textrm{lex}(\precU, \precM, \precmu, \precnu, \precL)
\end{equation}
Clearly, $\preceq$ is well-founded, thus we can conclude as follows.

\begin{thm}
  $\BA{}$ is strictly decreasing in $\preceq$, thus it terminates.
\end{thm}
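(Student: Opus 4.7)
The plan is to verify, rule-by-rule, that every rewriting step $G \rigidDPOstep{\BA{i}} H$ satisfies $H \prec G$. Since each component of~\eqref{eq:lex-metric} is $\le$ on $\N$ and hence well-founded, the lexicographic combination $\preceq$ is well-founded, so a strict decrease per step yields termination.

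For each rule I identify a single \emph{witnessing} component of $\preceq$ that strictly decreases, and check that all higher-priority components are unchanged. The assignment I expect is as follows. Rules $\BA{1}, \BA{2}$ (associativity, coassociativity) preserve hyperedge counts and all path counts, and strictly decrease $\precL$ by rebalancing a left-leaning $\mu$- (resp.\ $\nu$-) tree into a right-leaning one. Rules $\BA{3}$--$\BA{6}$ (unit/counit absorption laws) remove one $\mu$ or $\nu$ together with an $\eta$/$\epsilon$, strictly decreasing $\precmu$ or $\precnu$, while U-paths and M-paths can only go down. Rules $\BA{7}, \BA{8}$ (unit-comultiplication and counit-multiplication) replace a $\nu$-after-$\eta$ (resp.\ $\epsilon$-after-$\mu$) pattern by two parallel units, strictly decreasing $\precnu$ (resp.\ $\precmu$); the $2$ U-paths from $\eta$ through $\nu$ match the $2$ U-paths from the two parallel $\eta$'s. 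Rule $\BA{10}$ deletes a disconnected $\eta$-$\epsilon$ loop and strictly decreases $\precU$ by one.

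The local-to-global argument relies on the fact that each of the five measures is additive in an appropriate sense. The hyperedge-counting measures $\precmu, \precnu$ depend only on what is inside the matched region, so their change is immediate. For the path-counting measures $\precU, \precM$ and for $\precL$, the change decomposes into (i)~an internal contribution directly readable from the rule and (ii)~an external contribution determined by reachability between interface nodes. Since LHS and RHS of every rule share the same interface, I only need to verify that interface-reachability is preserved between LHS and RHS; for each of rules $\BA{1}$--$\BA{8}$ and $\BA{10}$ this is a short direct inspection of Figure~\ref{fig:hopf}. In particular, for $\BA{1}$ the only nontrivial change in $\precL$ comes from the two internal $\mu$'s: in the LHS they contribute $\ell(\mu_1) + \ell(\mu_2) = (1 + T_1 + T_2) + T_1$ where $T_i$ is the external $\mu$-tree weight feeding interface input $i$; in the RHS they contribute $T_1 + T_2$; the net change is $-(1 + T_1) \le -1$, strictly decreasing.

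The main obstacle is rule $\BA{9}$: its RHS contains two $\mu$'s and two $\nu$'s against one of each on the LHS, so $\precmu$ and $\precnu$ actually \emph{increase}, and I must rely on the strictly higher-priority components. Two things need verification. First, $\precU$ is preserved: a direct count on the four input-output pairs of the subgraph shows that each pair admits a unique internal path on both sides (the symmetry in the RHS pairs up the two $\nu$-outputs with the two $\mu$-inputs bijectively), so every U-path of $G$ crossing the matched region corresponds to a unique U-path of $H$ and vice versa. Second, $\precM$ strictly decreases by exactly one: internal M-paths drop from one (the $\mu\to\nu$ composite on the LHS) to zero (on the RHS the $\nu$'s strictly precede the $\mu$'s, so no internal $\mu \to \nu$ path exists), while M-paths with at least one endpoint external are determined entirely by interface reachability, which $\BA{9}$ preserves by the same bijective pairing used for $\precU$. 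This single destroyed M-path, together with the preservation of $\precU$, gives $H \prec G$ and completes the termination argument.
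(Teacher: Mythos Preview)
Your proposal is correct and follows essentially the same rule-by-rule strategy as the paper: identical witnesses for $\BA{1},\BA{2}$ ($\precL$), $\BA{7},\BA{8}$ ($\precnu,\precmu$), $\BA{9}$ ($\precM$), and $\BA{10}$ ($\precU$), with the same local-to-global reasoning about paths crossing the matched region. The one minor divergence is for $\BA{3}$--$\BA{6}$: the paper uses $\precU$ directly as the witness (these rules delete an $\eta$ or $\epsilon$, hence kill at least one U-path), whereas you use $\precmu$/$\precnu$ and argue that U- and M-paths are non-increasing; both are valid, though the paper's choice is slightly cleaner since it avoids having to check the higher-priority components.
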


\begin{proof}
  We argue rule-by-rule, showing that each is strictly decreasing in one of the orders from \eqref{eq:lex-metric}, and non-increasing in every order that is prior in the lexicographic ordering.

  Since every rule \BA{j} has a unique path from every input to every output on the LHS and RHS, applications of these rules have no effect on paths which start and finish outside of their image. Hence, for each rule, we only need to consider paths which start or terminate in the image of the LHS.

  \BA{1} has no effect on $\eta$ or $\epsilon$ hypereges, hence no effect on \precU. No M-path can terminate in \BA{1} and any M-path originating on \BA{1} must exit through the unique output. Since there are precisely two $\mu$-hyperedges in the LHS and RHS, there is a 1-to-1 correspondence between M-paths before and after applying the rule. \BA{1} also leaves the number of $\mu$ and $\nu$ hyperedges fixed, so we only need to show it strictly decreases \precL. Applying the rule has no effect on the L-weight of any $\mu$-hyperedges outside of the image of the LHS. Suppose there are $\mu$-trees of size $a, b, c$ connected to inputs $0, 1, 2$ of the LHS, respectively. Then, the L-weight of the two $\mu$-hyperedges on the LHS are $a$ and $a + b + 1$, whereas on the LHS they are $a$ and $b$. Hence \precL is strictly decreased. \BA{2} follows via a symmetric argument.

  Since \BA{3}--\BA{6} and \BA{10} remove $\eta$- and $\epsilon$-hyperedges from the hypergraph, they will strictly decrease the number of U-paths.

  For \BA{7}, no U-path can terminate in the LHS, and any U-path starting in LHS must exit through one of the two outputs. Hence it corresponds to a unique U-path exiting the RHS. M-paths are unaffected, as is the number of $\mu$-hyperedges. However, the number of $\nu$-hyperedges is strictly decreased, so \BA{7} strictly decreases \precnu. The argument for \BA{8} is again symmetric.

  \BA{9} has no $\eta$ or $\epsilon$-hyperedges in its LHS or RHS, so it leaves the number of U-paths fixed. Consider an M-path which enters the LHS from the left. It must enter either from input $0$ or input $1$, hence it corresponds to a unique M-path entering the RHS. We can argue similarly for M-paths exiting on the right. Hence, the only M-path left to consider is the one from the $\mu$-hyperedge to the $\nu$-hyperedge in the LHS, which is eliminated. Thus \BA{9} strictly reduces \precM.
\end{proof}

Hence, the combinatorial presentation gives a short and simple proof of termination. We conjecture that one can construct a similarly simple proof of confluence, via analysis of critical pairs. There is an existing notion of critical pair in the DPO literature, of which \BA{} rewrite system has 24 (all of which are `joinable'). There are, however, subtleties arising in critical pair analysis for DPO rewrite systems~\cite{Plump1993} and general rewriting for SMCs~\cite{Lafont2003}. We leave development of a comprehensive theoretical framework for critical pair analysis on convex DPO rewrite systems as future work.


\section{Conclusions and future work}

Our approach centered on building a bridge between
algebraic and combinatorial descriptions of string diagrams, in the form of the isomorphism $\syntax{\Sigma}+\frob \cong \FTerm\Sigma$. We used it to spell out two fundamental ways in which rewriting in symmetric monoidal categories and DPO hypergraph rewriting are compatible.


The first is that we can use DPO to rewrite SMCs generated by theories containing a special Frobenius structure. Theories of that kind are appearing in different research threads: they play a role in the compositional study of Petri nets ~\cite{Bruni2011,Sobocinski2014,Rathke2014a}, in the algebra of concurrent connectors~\cite{Bruni2006}, in signal flow diagrams~\cite{Bonchi2014b,Bonchi2015,BaezErbele-CategoriesInControl,Fong2015} and in the ZX-calculus~\cite{Coecke2008, Coecke2012}, just to mention a few examples. Our approach enables the use of the technology that has been developed for DPO in the last decades in those diverse areas.

The second approach -- convex DPO rewriting -- allows to rewrite in categories presented by arbitrary symmetric monoidal theories. Our results lay the theoretical foundations for the development of comprehensive tool support for reasoning about rewriting in SMCs. Emerging tools such as Globular~\cite{Globular} do not factor in the symmetric structure, whereas Quantomatic~\cite{Kissinger_quantomatic} relies on additional structure, requiring for example that the ambient category be traced. Our theory overcomes these limitations and can be effectively implemented: matching is efficient with respect to the size of the target hypergraph and the two conditions in the definition of a convex rewriting step -- convexity and boundary complement -- can be be automatically checked. Furthermore, under the encoding of hypergraphs as string graphs (the underlying theory used by Quantomatic), it is now possible to extend Quantomatic to support rewriting in a generic SMC. We leave the algorithmic details as future work.

Just as in term rewriting, commutative operators pose a interesting problem: naively encoding commutativity in a rule immediately yields a non-terminating system. We leave this issue as future work, but preliminary results suggest that the commutativity of some operators can be effectively encoded in the graphical structure.

From a theoretical point of view, we intend to build on this work to investigate the relationship between distributive laws of PROPs~\cite{Lack2004a} and rewriting. Compatibility conditions expressed by a distributive law come with an orientation, making natural to think of them as rewriting rules. Just as in our approach, these conditions are defined modulo the laws of SMCs: it thus appears promising to explore the correspondence between the two perspectives.

\bibliographystyle{akbib}
\bibliography{catBib3}

\begin{thebibliography}{10}

\bibitem{Globular}
{Globular}.
\newblock \url{http://globular.science/}.
\newblock Accessed: 2016-01-18.

\bibitem{BaezErbele-CategoriesInControl}
J.~Baez and J.~Erbele.
\newblock Categories in control.
\newblock {\em CoRR/1405.6881}, 2014.

\bibitem{Bonchi2014b}
F.~Bonchi, P.~Sobocinski, and F.~Zanasi.
\newblock A categorical semantics of signal flow graphs.
\newblock In {\em {CONCUR} 2014}, pages 435--450, 2014.

\bibitem{Bonchi2015}
F.~Bonchi, P.~Sobocinski, and F.~Zanasi.
\newblock Full abstraction for signal flow graphs.
\newblock In {\em {POPL} 2015}, pages 515--526, 2015.

\bibitem{Bruni2006}
R.~Bruni, I.~Lanese, and U.~Montanari.
\newblock A basic algebra of stateless connectors.
\newblock {\em Theor Comput Sci}, 366:98--120, 2006.

\bibitem{Bruni2011}
R.~Bruni, H.~C. Melgratti, and U.~Montanari.
\newblock A connector algebra for {P/T} nets interactions.
\newblock In {\em {CONCUR `11}}, pages 312--326, 2011.

\bibitem{Burroni1993}
A.~Burroni.
\newblock Higher dimensional word problems with applications to equational
  logic.
\newblock {\em Theor Comput Sci}, 115:43--62, 1993.

\bibitem{Coecke2008}
B.~Coecke and R.~Duncan.
\newblock Interacting quantum observables.
\newblock In {\em ICALP`08}, pages 298--310, 2008.

\bibitem{Coecke2012}
B.~Coecke, R.~Duncan, A.~Kissinger, and Q.~Wang.
\newblock Strong complementarity and non-locality in categorical quantum
  mechanics.
\newblock In {\em LiCS`12}, pages 245--254, 2012.

\bibitem{GadducciCorradini-termgraph}
A.~Corradini and F.~Gadducci.
\newblock An algebraic presentation of term graphs, via {GS}-monoidal
  categories.
\newblock {\em Appl Cat S}, 7(4):299--331, 1999.

\bibitem{open_graphs1}
L.~Dixon and A.~Kissinger.
\newblock Open-graphs and monoidal theories.
\newblock {\em Math Str Comput Sci}, 23:308--359, 4 2013.

\bibitem{Ehrig2004}
H.~Ehrig and B.~K\"{o}nig.
\newblock Deriving bisimulation congruences in the {DPO} approach to graph
  rewriting.
\newblock In {\em {FoSSaCS `04}}, LNCS, 2004.

\bibitem{Ehrig1976}
H.~Ehrig and H.-J. Kreowski.
\newblock Parallelism of manipulations in multidimensional information
  structures.
\newblock In {\em MFCS `76}, LNCS, 1976.

\bibitem{Fong2015}
B.~Fong, P.~Rapisarda, and P.~Soboci\'{n}ski.
\newblock A categorical approach to open \& interconnected dynamical systems.
\newblock {\em CoRR/1510.05076}, 2015.

\bibitem{Gadducci1998}
F.~Gadducci and R.~Heckel.
\newblock An inductive view of graph transformation.
\newblock LNCS. Springer, 1998.

\bibitem{DBLP:journals/corr/abs-math-0612083}
Y.~Guiraud.
\newblock Termination orders for 3-dimensional rewriting.
\newblock {\em CoRR}, abs/math/0612083, 2006.

\bibitem{hyland2007category}
M.~Hyland and J.~Power.
\newblock Lawvere theories and monads.
\newblock {\em ENTCS}, 172:437--458, 2007.

\bibitem{Joyal1991}
A.~Joyal and R.~Street.
\newblock The geometry of tensor calculus, \textsc{i}.
\newblock {\em Adv Math}, 88:55--112, 1991.

\bibitem{Joyal_tracedcategories}
A.~Joyal, R.~Street, and D.~Verity.
\newblock Traced monoidal categories.
\newblock {\em Math Proc Cambridge Phil Soc}, 119:447--468, 4 1996.

\bibitem{Kassel-quantumgroups2012}
C.~Kassel.
\newblock {\em Quantum Groups}.
\newblock Springer, 2012.

\bibitem{Kelly1980}
G.~M. Kelly and M.~L. Laplaza.
\newblock Coherence for compact closed categories.
\newblock {\em J Pure Appl Algebra}, 19:193--213, 1980.

\bibitem{Kissinger_quantomatic}
A.~Kissinger and V.~Zamdzhiev.
\newblock Quantomatic: {A} proof assistant for diagrammatic reasoning.
\newblock {\em CoRR}, abs/1503.01034, 2015.

\bibitem{Lack2004a}
S.~Lack.
\newblock Composing {PROPs}.
\newblock {\em Theor App Cats}, 13(9):147--163, 2004.

\bibitem{Lack2005}
S.~Lack and P.~Soboci\'{n}ski.
\newblock Adhesive and quasiadhesive categories.
\newblock {\em RAIRO-Theor.\ Inf.\ Appl.}, 39(3):511--546, 2005.

\bibitem{Lafont2003}
Y.~Lafont.
\newblock Towards an algebraic theory of boolean circuits.
\newblock {\em J Pure Appl Alg}, 184:257--310, 2003.

\bibitem{MacLane1965}
S.~{Mac Lane}.
\newblock Categorical algebra.
\newblock {\em B Am Math Soc}, 71:40--106, 1965.

\bibitem{MacDonald2009}
J.~MacDonald and L.~Scull.
\newblock Amalgamations of categories.
\newblock {\em Can Math B}, 52:273--284, 2009.

\bibitem{Meseguer1990}
J.~Meseguer and U.~Montanari.
\newblock {P}etri nets are monoids.
\newblock {\em Inform Comput}, 88:105--155, 1990.

\bibitem{Mimram14}
S.~Mimram.
\newblock Towards 3-dimensional rewriting theory.
\newblock {\em Log Meth Comput Sci}, 10(2), 2014.

\bibitem{Plump1993}
D.~Plump.
\newblock In {\em Term Graph Rewriting}, chapter Hypergraph Rewriting: Critical
  Pairs and Undecidability of Confluence, pages 201--213. Wiley, 1993.

\bibitem{Rathke2014a}
J.~Rathke, P.~Soboci\'{n}ski, and O.~Stephens.
\newblock Compositional reachability in {P}etri nets.
\newblock In {\em Reachability Problems `14}, 2014.

\bibitem{Rosebrugh2005}
R.~Rosebrugh, N.~Sabadini, and R.~Walters.
\newblock Generic commutative separable algebras and cospans of graphs.
\newblock {\em Th Appl Categories}, 17(6):164--177, 2005.

\bibitem{Sassone2005a}
V.~Sassone and P.~Soboci\'{n}ski.
\newblock Reactive systems over cospans.
\newblock In {\em {LiCS `05}}, pages 311--320. IEEE Press, 2005.

\bibitem{Selinger07DaggerCC}
P.~Selinger.
\newblock Dagger compact closed categories and completely positive maps.
\newblock {\em ENTCS}, 170:139--163, 2007.

\bibitem{Selinger2009}
P.~Selinger.
\newblock A survey of graphical languages for monoidal categories.
\newblock {\em Springer Lecture Notes in Physics}, 13(813):289--355, 2011.

\bibitem{TermGraphSurvey}
M.~R. Sleep, M.~J. Plasmeijer, and M.~C. J.~D. van Eekelen, editors.
\newblock {\em Term Graph Rewriting: Theory and Practice}.
\newblock Wiley, 1993.

\bibitem{Sobocinski2014}
P.~Soboci\'{n}ski and O.~Stephens.
\newblock A programming language for spatial distribution of net systems.
\newblock In {\em Petri Nets `14}, 2014.

\bibitem{Street-computads}
R.~Street.
\newblock Limits indexed by category-valued 2-functors.
\newblock {\em J Pure Appl Algebra}, 8(2):149 -- 181, 1976.

\bibitem{Walters-lecture-WSCC}
R.~Walters.
\newblock The tensor product of matrices.
\newblock Lecture, International Conference on Category Theory,
  Louvain-la-Neuve, 1987.

\bibitem{ZanasiThesis}
F.~Zanasi.
\newblock {\em Interacting Hopf Algebras: the theory of linear systems}.
\newblock PhD thesis, {E}cole Normale Sup\'{e}rieure de Lyon, 2015.

\end{thebibliography}
\newpage

\appendix
\section{Omitted proofs}\label{app:proofs}

\paragraph{Preliminaries on hypergraphs} The following terminology will be used throughout the proofs of this appendix. Given an hypergraph $G$, we write $N_G$ for its set of nodes and $E_G$ for the set of hyperedges. Given an hyperedge $h$ with $k$ (ordered) sources and $l$ (ordered) targets, we write $s_j(h)$ for the $j$-th source node of $h$ and $t_j(h)$ for the $j$-th target node of $h$.

For a pair of hyperedges $h,h'$ such that for some $i,j$ and node $v$, $t_i(h) = v = s_j(h)$, then $h$ is called a \textit{predecessor} of $h'$ and $h'$ a \textit{successor} of $h$.

    For a hypergraph $G$ and hyperedges $h,h'$, a \textit{directed path} $p$ from $h$ to $h'$ is a sequence of hyperedges $[h_1, \ldots, h_n]$ such that $h_1 = h$, $h_n = h'$, and for $i < n$, $h_{i+1}$ is a successor of $h_i$. We say $p$ starts at a subgraph $H$ if $H$ contains a node in the source of $h$, and terminates at a subgraph $H'$ if $H'$ contains a node in the target of $h'$. We say $p$ starts at a subgraph $H$ if there exists some $i$ such that $s_i(h) \in H$, and terminates at a subgraph $H'$ if there exists some $j$ where $t_j(h') \in H'$.

Note that, by regarding nodes as single-node subgraphs, it clearly makes sense to talk about directed paths from/to nodes as well.

\begin{proof}[Proof of Lemma~\ref{lemma:convexfact}]
    Let $C_1$ be the smallest sub-hypergraph containing the inputs of $G$ and every hyperedge $h$ which is not in $L$, but has a path to it. Let $C_2$ then be the smallest sub-hypergraph containing the outputs of $G$ such that $C_1 \cup L \cup C_2 = G$. By construction, $C_1$ and $L$ share no hyperedges. If $C_2$ shared a hyperedge with $C_1$ or $L$, then there would exist a smaller $C_2'$ such that $C_1 \cup L \cup C_2' = G$, so $C_2$ shares no hyperedges with either $C_1$ or $L$. Hence, the three sub-hypergraphs only overlap on nodes. Now let:
    \begin{align*}
        i & := N_{C_1} \cap N_L \\
        j & := N_{C_2} \cap N_L \\
        k & := (N_{C_1} \cap N_{C_2}) \backslash N_L
    \end{align*}
    Since $p$ is disjoint from $m'$ and $n'$, it follows that:
    \[
        k + i = N_{C_1} \cap (N_L \cup N_{C_2})
        \qquad
        k + j = N_{C_2} \cap (N_L \cup N_{C_1})
    \]
    Now, define the following cospans, where arrows are all inclusions:
    \[ n \rightarrow C_1 \leftarrow k + i \]
    \[ i \rightarrow L \leftarrow j \]
    \[ k + j \rightarrow C_2 \leftarrow m \]
    Then \eqref{eq:snd-factor} is computed as the colimit of the following diagram:
    \[ n \rightarrow C_1 \leftarrow k + i
         \rightarrow k + L \leftarrow k + j
         \rightarrow C_2 \leftarrow m \]
    The two spans identify precisely those nodes from $G$ which occur in more than one sub-hypergraph, so this amounts to simply taking the union:
    \[ n \rightarrow C_1 \cup L \cup C_2 \leftarrow m \ =\ n \rightarrow G \leftarrow m \]

    $C_1, C_2$, and $L$ are all directed acyclic because $G$ is, so it only remains to show each of these cospans is monogamous. $C_1$ and $C_2$ follow straightforwardly from the observation that $C_1$ is closed under predecessors and $C_2$ under successors.


    The interesting case is $L$, which relies on convexity. Suppose $v$ does not have a in-hyperedge in $L$. Then either $v$ is an input or there exists a hyperedge with a path to $v$. One of these two is true precisely when $v \in i$. Suppose $v$ does not have an out-hyperedge in $L$. Then, either $v$ is an ouput or it has an out-hyperedge in $C_1$ or $C_2$. But, if it has an out-hyperedge $h$ in $C_1$, then there is a path from $v$ to another node $v'$, going through $h$. But by convexity, $h$ must then be in $L$, which is a contradiction. Hence $v \in C_2$, which is true if and only if $v \in j$.
\end{proof}

We now introduce the necessary ingredients for the proof of Proposition~\ref{prop:faithful}. The initial object in the category $\PROP$ is the PROP $\PERM$ of permutations: arrows $n \to m$ in $\PERM$ exist only if $m = n$, in which case they are the permutations of the finite ordinal with $n$ elements.
Given a PROP $\X$, the unique PROP morphism $\PERM \to\X$ need not be, in general, faithful; an extreme example is the terminal prop $\mathbf{1}$ where all hom-sets are singletons.
We will say that $\X$ is \emph{faithful} if $\PERM \to\X$ is faithful. Roughly speaking, faithfulness says that no two different permutations are identified. Proposition~\ref{prop:faithful} will follow as corollary of the next lemma about faithful PROPs.

\begin{lem}\label{lem:faithfulCoproducts}
Coproducts of faithful PROPs are faithful and have faithful coproduct injections.
\end{lem}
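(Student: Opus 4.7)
The plan is to first reduce the two claims to one. Since $\PERM$ is initial in $\PROP$, the canonical morphism $\PERM \to \X_1 + \X_2$ factors as $\PERM \to \X_1 \xrightarrow{\iota_1} \X_1 + \X_2$. Assuming $\X_1$ is faithful (so that $\PERM \to \X_1$ is faithful) and the coproduct injection $\iota_1$ is faithful, then the composition $\PERM \to \X_1 + \X_2$ is faithful by definition. Hence the essential task is to show that the coproduct injections $\iota_i \colon \X_i \to \X_1 + \X_2$ are faithful whenever each $\X_i$ is faithful.

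To tackle the faithfulness of $\iota_1$, I would use an explicit presentation of the coproduct. Since $\PROP$ is cocomplete and $\PERM$ is initial, the coproduct coincides with the pushout $\X_1 \amalg_{\PERM} \X_2$, whose arrows can be realised as equivalence classes of formal $\poi$-$\tns$ composites built from the arrows of $\X_1$ and $\X_2$ (sharing the common $\PERM$-part), quotiented by the PROP axioms together with the internal equations of each $\X_i$. The injection $\iota_1$ sends an arrow of $\X_1$ to the class of its single-block presentation.

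The technical heart is then a conservativity statement: if $f,g \in \X_1(n,m)$ become equal in $\X_1 + \X_2$, they must be equal already in $\X_1$. I would establish this via a normal form for arrows of the coproduct, namely a layered presentation in which each layer consists solely of generators drawn from a single summand (separated by permutations), with adjacent same-summand layers maximally fused using the internal composition of that summand. Confluence and termination of the associated rewriting system, combined with faithfulness of $\PERM \to \X_i$, yield uniqueness of the normal form up to equivalence within each summand. A pure $\X_1$-arrow $f$ then has the trivial single-layer normal form $f$ itself, so two distinct $\X_1$-arrows remain distinct in the coproduct.

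The main obstacle is the confluence verification. The interchange law and the naturality of symmetries generate critical pairs between an $\X_1$-layer and a neighbouring $\X_2$-layer that must be shown to be joinable; this step crucially relies on the hypothesis that each $\PERM \to \X_i$ is faithful, for it is exactly this assumption that prevents permutations transported across layer boundaries from being inadvertently collapsed. Once this is in place, the only identifications effected by the quotient are those internal to $\X_1$ (respectively $\X_2$), from which the faithfulness of $\iota_1$ (respectively $\iota_2$) follows.
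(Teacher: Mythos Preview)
Your reduction of the two claims to the single task of showing the injections $\iota_i$ faithful is correct, and your identification of the coproduct with the pushout $\X_1 \amalg_{\PERM} \X_2$ is also the starting point the paper uses. From there, however, the routes diverge.

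The paper does not attempt a normal-form analysis. Instead it observes that the pushout in $\PROP$ is computed as a pushout in $\PRO$ (equivalently in $\Cat$, since the maps are identity-on-objects) and then invokes the amalgamation theorem of MacDonald and Scull~\cite{MacDonald2009}. That theorem guarantees faithful pushout injections provided the span legs satisfy the \emph{3-for-2} property: if $h = g \circ f$ and any two of $f,g,h$ lie in the image, so does the third. Since every arrow of $\PERM$ is an isomorphism, this property holds trivially for $\PERM \to \X_i$, and the result follows in one line.

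Your layered normal-form strategy is plausible in spirit, but as written it is a plan rather than a proof: you identify confluence of the rewriting system as ``the main obstacle'' and then do not discharge it. Establishing confluence here is genuinely delicate---the interaction of interchange, symmetry naturality, and arbitrary internal equations of the $\X_i$ produces many overlapping critical pairs, and one cannot simply appeal to faithfulness of $\PERM \to \X_i$ to resolve them without further argument. So the gap is not that your approach is wrong, but that the hard technical content has been deferred rather than supplied. The paper's route sidesteps this entirely: the MacDonald--Scull criterion packages exactly the amalgamation machinery you would otherwise have to rebuild by hand, and its hypothesis reduces to the observation that $\PERM$ is a groupoid.
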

\begin{proof}
We use the fact that the category of PROPs can be identified with a full subcategory of the coslice $\PERM /\PRO$, see~e.g.\cite[Prop.~2.8]{ZanasiThesis}. Here $\PRO$ is the category of PROs, with: (i) object strict monoidal categories that have the natural numbers as the set of objects, where monoidal product on objects is addition, and (ii) arrows strict monoidal identity-on-objects functors. Since $\PERM$ is symmetric monoidal, $\PROP$ is then the full subcategory of $\PERM /\PRO$ with objects the \emph{symmetric} monoidal functors.

To calculate the coproduct $\X+\Y$ of two faithful PROPs $\X$ and $\Y$ in $\PROP$ is to take the following pushout in $\PRO$.
\[
\xymatrix@R=10pt{
& \PERM \ar[dl]_x \ar[dr]^y \\
\X \ar[dr]_-{i_1} & & \Y \ar[dl]^-{i_2} \\
& \X+\Y
}
\]
Since $x$ and $y$ are symmetric monoidal, it follows that $i_1$ and $i_2$ are symmetric monoidal~\cite[Prop.~2.10]{ZanasiThesis}. It thus remains to show that they are faithful.

Here we use a result of MacDonald and Scull~\cite[Th.~3.3]{MacDonald2009} about amalgamation in the category of categories $\mathbf{Cat}$. Indeed, since $x$ and $y$ are identity-on-objects, $i_1$ and $i_2$ are also identity-on-objects. It suffices to show that $x$ and $y$ satisfy the so-called 3-for-2 property: for $x$, this requires that given $h=g\circ f$ in $\mathbb{X}$, if any two of $f,g,h$ are in the image of $x$ then so is the third.
It is enough to consider $x$ since the argument for $y$ is identical. And here the 3-for-2 property trivially holds, since every arrow of $\PERM$ is an isomorphism.
\end{proof}

\begin{proof}[Proof of Proposition~\ref{prop:faithful}]
Immediate from Lemma~\ref{lem:faithfulCoproducts} since the top row of~\eqref{eq:fterm} is a coproduct diagram in $\mathsf{PROP}$, and $\syntax{\Sigma}$, $\frob$ are both faithful.
\end{proof}

\begin{figure}\caption{Pushout of diagram~\eqref{eq:decomp} \label{fig:bigpushout}}
\centering
    \[
\xymatrix@R=15pt@C=10pt{
n \ar[dr]_{f} & & N+\tilde{n} \ar@{}[dd]|(.8){\wedge}
\ar[dl]_{[id\ j]} \ar[dr]^{id\oplus i_1}
& & {N+\tilde{m}} \ar@{}[dd]|(.8){\wedge}
\ar[dl]_{id\oplus i_2} \ar[dr]^{[id\ p]} & & m. \ar[dl]^{g}
\\
&  N \ar[dr]_{i_1} & & N + \tilde{n} + \tilde{m} \ar@{}[dd]|(.8){\wedge}
 \ar[dl]_{[id \ j]\oplus id }
\ar[dr]^-{([id\ p] \oplus id)\sigma_{\tilde{n},\tilde{m}}}
& & N \ar[dl]^{i_1}
\\
& & N+\tilde{m} \ar[dr]_{[id\ p]}& & N + \tilde{n} \ar[dl]^{[id\ j]}
\\
& & & N
}
\]
\end{figure}

\begin{proof}[Proof of Proposition~\ref{thm:uniquenessBoundaryCompl}]
    Since $\Hyp{\Sigma}$ is a presheaf category, a pushout of hypergraphs consists of pushouts on the underlying sets of nodes and hyperedges and an appropriate choice of source and target maps. Hence the underlying sets of $L^\perp$ must give pushout complements in $\F$:
    \[
    \xymatrix{
    N_L \ar[d]_{f_V} &  i + j \ar[l]_{a_V} \ar[d]^{c_V} \ar@{}[dl]|(.8){\text{\large $\urcorner$}}  \\
    N_G & N_{L^\perp} \ar[l]_{g_V}  \\
    }
    \qquad
    \xymatrix{
    E_L \ar[d]_{f_E} & 0 \ar[l] \ar[d] \ar@{}[dl]|(.8){\text{\large $\urcorner$}}  \\
   E_G & E_{L^\perp} \ar[l]_{g_E}  \\
    }
    \]
    Since $i + j$ is discrete, $E_G$ is a disjoint union, so $E_{L^\perp}$ must be $E_G \backslash E_L$. Since these are diagrams in $\F$, and $c,f$ are mono, we can rewrite the left pushout square as:
    \[
    \xymatrix{
    l + x \ar[d]_{f_V} &  i + j \ar[l]_{a_V} \ar[d]^{c_V} \ar@{}[dl]|(.8){\text{\large $\urcorner$}}  \\
    l + x + y & i + j + z \ar[l]_{g_V}  \\
    }
    \]
    where the two downward arrows are coproduct injections and $l$ is the image of $a_V$. One can easy verify that $l + x + z$ also gives a pushout for the given span, so we obtain a commuting isomorphism $l + x + y \cong l + x + z$, from which we conclude $y \cong z$ and that, up to isomorphism, the pushout complement on nodes must be:
    \[
    \xymatrix{
   l + x \ar[d]_{f_V}  &  i + j \ar[l]_{a_V} \ar[d]^{c_V} \ar@{}[dl]|(.8){\text{\large $\urcorner$}}  \\
    l + x + y  &  i + j + y \ar[l]^{g_V} \\
    }
    \]
    from whence it follows that $g_V = a_V + id_n$.

\medskip
So far, we have proved that the set of nodes $N_{L^\perp}$ and $E_{L^\perp}$ are defined uniquely by the property of being a pushout complement. The only thing that remains is the source $s_p\: E_{L^\perp} \to N_{L^\perp}$ and target $t_p\: E_{L^\perp} \to N_{L^\perp}$ maps.
Since $g$ is a homomorphism, we have that for all hyperedges $h$:
    \[ g_V(s_p(h)) = s_p(g_E(h)) \ \implies\ s_p(h) \in g_V^{-1}(s_p(g_E(h))) \]
    Since $g_V$ is of the form $[(a_1)_V, (a_2)_V] + 1_n$, where $a_1$ and $a_2$ are mono, the inverse image $g_V^{-1}(s_p(g_E(h)))$ contains at most two elements. In the case where it has 1 element, $s_p$ is uniquely fixed, so consider when it has two. It must then be the case that:
    \[ g_V^{-1}(s_p(g_E(h))) = \{ v_1 \in i, v_2 \in j \} \]

But monogamicity of \eqref{eq:boundary} says that the image of $i$ in $L^\perp$ cannot be the source of any hyperedge. Therefore, it must be $s_p(h) = v_2$. Similarly if:
    \[ g_V^{-1}(t_p(g_E(h))) = \{ v_1 \in i, v_2 \in j \} \]
    then $t_p(h)$ must be $v_1$. Since there is at most one choice of source and target maps for $L^\perp$ making $g$ a homomorphism, $L^\perp$ must be unique.
\end{proof} 

\begin{proof}[Proof of Theorem~\ref{th:adequacyRigidSMT}]
We report here the argument for the only if direction of the statement, which is not detailed in the main text. Our assumption gives that
\begin{eqnarray*}
 \cgr{circuitd.pdf} & =& \cgr{factl.pdf} \\
 \cgr{circuite.pdf} & =& \cgr{factr.pdf}.
\end{eqnarray*}
whence the following equalities hold in $\syntax{\Sigma} + \frob$, see~\eqref{eq:rewd}-\eqref{eq:rewe}.
\begin{eqnarray}
  \cgr{rewd1.pdf} & =& \cgr{rewd4.pdf} \label{eq:rewcompld}\\
    \cgr{rewe1.pdf} & =& \cgr{rewe4.pdf} \label{eq:rewcomple}
\end{eqnarray}
We now define
\begin{eqnarray*}
 \left( 0 \tr{} D \tl{[q_1,q_2]} n+m\right) & \df & \Phi(\rewiring{d}) = \Phi \left( \cgr[height=.6cm]{rewd1.pdf} \right) \\
 \left( 0 \tr{} E \tl{[p_1,p_2]} n+m\right) & \df & \Phi(\rewiring{e}) = \Phi \left( \cgr[height=.6cm]{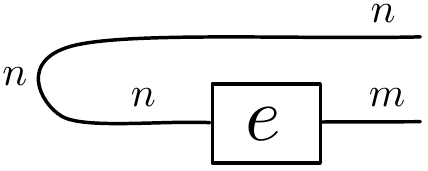} \right) \\
 \left( 0 \tr{} L \tl{[a_1,a_2]} i+j\right) & \df & \Phi(\rewiring{l}) = \Phi \left( \cgr[height=.6cm]{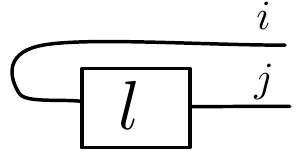} \right) \\
 \left( 0 \tr{} R \tl{[b_1,b_2]} i+j\right) & \df & \Phi(\rewiring{r}) = \Phi \left( \cgr[height=.6cm]{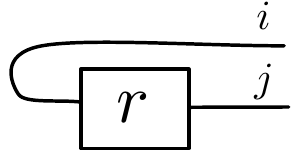} \right) \\
 \left( i+j \tr{} C \tl{} n+m \right) & \df & \Phi \left( \cgr[height=1.1cm]{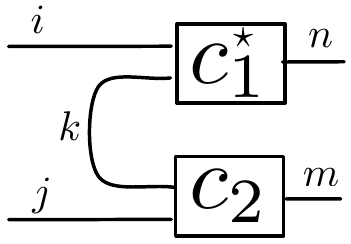} \right). \\
\end{eqnarray*}
By these definitions and \eqref{eq:rewcompld}-\eqref{eq:rewcomple} it follows that
\begin{eqnarray*}
\left( 0 \tr{} D \tl{} n+m \right) = \left( 0 \tr{} L \tl{} i+j\right) \poi \left( i+j \tr{} C \tl{} n+m\right) \\
\left( 0 \tr{} E \tl{} n+m \right) = \left( 0 \tr{} R \tl{} i+j\right) \poi \left( i+j \tr{} C \tl{} n+m\right).
\end{eqnarray*}
Since composition of cospans is defined by pushout, we have a commutative diagram with two pushouts as in~\eqref{eq:dpo3}. It remains to check that the matching $L \to D$ is convex and that $C$ is a boundary complement: these conditions can be verified by definition of the involved components. Therefore, $\Phi(\rewiring{d}) \rigidDPOstep{\Phi(\rewiring{\mathcal{R}})} \Phi(\rewiring{e})$ by application of the rule $\rrule{\Phi(\rewiring{l})}{\Phi(\rewiring{r})}$.
\end{proof}

\end{document}